\newtheorem{theorem}{Theorem}[section]
\newtheorem{lemma}[theorem]{Lemma}
\newtheorem{remark}[theorem]{Remark}
\newcommand{\jump}[1]{[\hspace{-2.0pt}[ #1 ]\hspace{-2.0pt}]}
\newcommand{\mean}[1]{\{\hspace{-3.5pt}\{ #1 \}\hspace{-3.5pt}\}}
\newcommand{\nE}{n}
\newcommand{\beq}{\begin{equation}}
\newcommand{\eeq}{\end{equation}}
\def\ds{\displaystyle}
\newcommand{\nn}{\nonumber}
\newcommand{\cT}{\mathcal{T}}
\newcommand{\cE}{\mathcal{E}}
\newcommand{\cV}{\mathcal{V}}
\newcommand{\hu}{\hat u}
\newcommand{\dT}{\partial T}
\newcommand{\setR}{\mathbb{R}}
\newcommand{\al}{\alpha}
\newcommand{\si}{\sigma}
\newcommand{\Ga}{\Gamma}
\newcommand{\Om}{\Omega}
\newcommand{\opdiv}{\operatorname{\rm div}}
\newcommand{\dn}{\partial_n}
\newcommand{\Vnull}{V_h^0}
\begin{document}

\title[An Equilibration Based Error Estimate
   for the Biharmonic Equation]{An Equilibration Based A Posteriori Error Estimate
   for the Biharmonic Equation and Two Finite Element Methods }

\author[D.\ Braess]{Dietrich  Braess}
\address{Dietrich Braess\\ Faculty of Mathematics\\ Ruhr-University\\ D-44780 Bochum, Germany}
\email{dietrich.braess@rub.de}
\author[A.S.\ Pechstein]{Astrid S.\ Pechstein}
\address{Astrid. S.\ Pechstein\\ Institute of Technical Mechanics\\ Johannes Kepler University Linz\\ Altenbergerstr. 69\\ 4040 Linz, Austria}
\email{astrid.pechstein@jku.at}
\author[J.\ Sch\"oberl]{Joachim Sch\"oberl}
\address{Joachim\ Sch\"oberl\\ Institute for Analysis and Scientific Computing\\ Vienna University of Technology\\ Wiedner Hauptstrasse 8-10\\ 1040 Wien, Austria}
\email{joachim.schoeberl@tuwien.ac.at}

\begin{abstract}
We develop an a posteriori error estimator for the Interior Penalty 
Discontinuous Galerkin approximation of the biharmonic equation with 
continuous finite elements.
The error bound is based on the 
two-energies principle and requires the computation of an equilibrated
moment tensor.    
The natural space for the moment tensor
consists of symmetric tensor fields
with continuous  normal-normal components. It
is known from the 
Hellan-Herrmann-Johnson (HHJ) mixed formulation. We propose a
construction that is totally local.
The procedure can also be applied to the original HHJ formulation,
which directly provides an equilibrated moment tensor. 
\end{abstract}

\subjclass[2010]{Primary hprimary classi;
Secondary hsecondary classesi}

\keywords{biharmonic equation, equilibrated error estimate, discontinuous Galerkin, mixed formulation, Hellan--Herrmann--Johnson plate elements} 

\thanks{The authors want to thank Carsten Carstensen for providing us with
the numerical constant in the estimate of the data oscillation.}
\maketitle



\section{Introduction}\label{sec:Intro}
\setcounter{equation}{0}
The numerical solution of the biharmonic equation by the discontinuous
Galerkin method attracts interest in order to avoid $H^2$-conforming
elements.
The classical formulation of the biharmonic equation reads: {\em find $u \in H^2_0(\Om)$ such that}
\beq
\label{strong}
  \Delta^2 u = f.
\eeq

In the framework of plate theory, the biharmonic equation is used as a model for  Kirchhoff plates.
The present paper refers to the Hellan--Herrmann--Johnson plate formulation \cite{HEL67, HER67, JOH73}
with two equations of second order,
\begin{align}
  \label{HHJ}
  \left. \begin{array}{r} \nabla^2 u = \si , \\
  \opdiv\opdiv \si = f . \end{array} \right.
\end{align}

In the context of plate theory, the scalar function $u$ represents the deflection and the tensor field $\si$  the bending moment.
For
generalizations and error estimates of the Hellan--Herrmann--Johnson formulation see
\cite{AB85,BdVNS08,COM80,GHV11}.

The DG methods for the treatment of \eqref{HHJ} depart from the weak formulation:
{\em find $u \in H^2_0(\Om)$ such that}
\beq
\label{weak}
  \int_{\Om} \nabla^2 u : \nabla^2 w \,dx = \int_{\Om} f w\,dx
  \quad\text{ for all~} w\in H^2_0(\Om).
\eeq
Penalty terms are added to the corresponding energy functional
in order to deal with the nonconforming elements;
see the early work for fully discontinuous elements \cite{Baker:77}.
The case of continuous, but not continuously differentiable $C^0$ elements
is treated in Section~\ref{C0IPDG} below.

Several a posteriori estimates of residual type can be found
in the literature \cite{BON10, BGS10, FHP15, GHV11, VER13}. 
Recently an a posteriori error estimate has been established by the
two-energies principle (hypercircle method) for the full discontinuous interior
penalty (IPDG) method \cite{BHL15}, where the finite elements for the $u$-variable
are not even $H^1$-conforming.

In this paper we turn to the interior penalty discontinuous Galerkin method
with continuous finite elements ($C^0$IPDG). Here only jumps in the derivatives need
to be penalized. Although the difference to the above mentioned IPDG method seems to be small,
the two-energies principle requires a quite different approach here.

The main part of the discretization error will be evaluated by use of a
tensor $\sigma_h^{eq}$ of bending moments with the  equilibration property
\beq
\label{divdiv-si}
  \opdiv\opdiv\sigma_h^{eq}=f_h.
\eeq
We will consider the operator $\opdiv\opdiv$ as a  differential operator in distributional sense.
It has been  analyzed in the framework of the Tangential Displacement Normal Normal Stress method \cite{PS11,PS16} for continuum mechanics. 
The right-hand side $f_h$ is a finite element
approximation of $f$ in the distributional sense.
The tensor $\sigma_h^{eq}$ is taken from the space of Hellan--Herrmann--Johnson elements
which are symmetric, piecewise polynomial tensors with continuous normal-normal components.

The equilibrated tensor $\sigma_h^{eq}$ will be computed by a postprocessing which uses only  local procedures.
The analysis for the nonconforming 
DG method is more involved
than for the mixed method with Hellan--Herrmann--Johnson elements
although there is a great similarity. 
It shows that the DG method may be considered as a formulation
between a primal and a mixed method.

The present paper is organized as follows:
Section 2 lists some notation.
In Section 3 we introduce the two-energies principle for the biharmonic
equation with the distributional form of the double divergence operator.
Moreover we discuss the treatment of nonconforming (i.e.\ non-$C^1$) elements.
Section 4 presents the $C^0$IPDG version of the discontinuous
Galerkin method.
Section 5 is devoted to the equilibration procedure, and Section 6
deals with the data oscillation.
The efficiency of the resulting a posteriori error bound is shown in Section 7. 
A short excursion to the Hellan--Herrmann--Johnson element
and a corresponding a posteriori error estimate follows in Section 8.
Numerical results in Section 9 verify the theoretical results and show
how other boundary conditions are covered.

\section {Notation}
%
%
We consider
the biharmonic equation on a bounded, open polygonal Lipschitz domain $ \Om \subset \mathbb{R}^2$.
Let $ \cT_h$ be a geometrically conforming,
locally quasi-uniform simplicial triangulation of $\Om$.
We denote the sets of edges and of vertices by $ \cE_h$ and $\cV_h$ including boundary edges and vertices, respectively. We write $\cE_h^0$ and $\cV_h^0$ for the subsets contained in the interior of $\Omega$.
Given
an edge or element $D \in \cT_h \cup \cE_h$ and $m \in \mathbb{N}$, we refer to $ P^{m}(D)$ as the set of polynomials of degree $ \le m$ on $D$. 
The set of symmetric $2\times2$ tensors with components in $P^m(D)$ is referred to as $[P^m(D)]^{2\times2}_{sym}$.

We denote the outward unit normal vector of an element $T \in \cT_h$ by $n$
and obtain the tangential vector $t$ by rotating $n$ by $\pi/2$.
We consider all edges as oriented, i.e., an edge is pointing from vertex $V_1(E)$
to vertex $V_2(E)$. We refer to $T_1(E)$ as the element on the left-hand side of $E$, while $T_2(E)$ lies on the right-hand side;
only $T_1(E)$ exists for edges on the boundary. The normal and tangential vector of an edge $E$ shall coincide with those of $T_1(E)$.

A piecewise continuous tensor field $\tau$ on $\Om$ has a normal vector $\tau_{n} = \tau n $ on the boundary of each element $T$. The normal vector can be decomposed into a (scalar) normal and tangential component, $\tau_{nn} = \tau_{n} \cdot n$ and $\tau_{nt} = \tau_{n} \cdot t$. Note that $\tau_{nn}$ and $\tau_{nt}$ are invariant under a change of orientation of $n$ and $t$.

Let $E$ be an interior edge shared by elements $T_1 = T_1(E)$ and $T_2 = T_2(E)$.
Given a scalar function with smooth restrictions
$\phi_i :=  \phi|_{T_i}$, we define the average and the jump 
\[
  \mean{\phi} := \frac12 (\phi_1+\phi_2), \quad
   \jump{\phi} := \phi_1 - \phi_2
  \quad\text{on } E \in \cE_h^0.
\]
This definition holds also for $\phi$ being a scalar-valued tensor component.
We further need the jump of the normal derivative,
\[
   \jump{\partial_n \phi} := \jump{\nabla\phi}\cdot n
 =\nabla\phi_1\cdot n_1 + \nabla\phi_2\cdot n_2
  \quad\text{on } E \in \cE_h^0.
\]
Although the jump $\jump{\phi}$ does depend on the orientation of the edge, it will only occur in products with other quantities that depend on the orientation. The final outcome is then invariant.
Jump and average are defined on a boundary edge $E \subset \Gamma$ by
\[
  \mean{\phi} := \phi_1, \quad
   \jump{\phi} := \phi_1, \quad \jump{\partial_n \phi} = \nabla\phi_1\cdot n_1.
  \quad\text{on } E \in \cE_h \backslash\cE_h^0.
\]



We will use standard notation from Lebesgue and Sobolev space
theory. We denote the $L_2$-inner product and the associated $L_2$-norm of $\Omega$ 
 by $ (\cdot,\cdot)_{0,\Om} $ and $ \| \cdot \|_{0,\Om} $, respectively.
The product $\langle \cdot, \cdot \rangle$ denotes a duality pairing.

Finite element spaces will be involved 
that are only piecewise $H^2$ function on $\cT_h$. 
The double gradient is understood as a pointwise derivative denoted by $\nabla_h^2$,
e.g., in the broken seminorm
\begin{equation}
  |v|_{2,h}^2 := \|\nabla_h^2 v\|_{0,\Omega}^2 = \sum_{T\in \cT_h} \|\nabla^2 v\|_{0,T}^2 \,.
	\label{eq:brokenH2}
\end{equation}
\section{A two-energies principle for the biharmonic equation}
\label{sec:TwoEnergies}
\subsection{The principle}
The two-energies principle was originally established by Pra\-ger and Synge
\cite{PRS47, SYN47} for elliptic equations of second order
under the name {\em hypercircle method}.
It has been used by many authors,
e.g., in \cite{AIR10,BRA07,BFH14, BHS08, ENV07, Stenberg}
for the evaluation of a posteriori error estimates.
The principle was reformulated several times in order to obtain
error estimates by a postprocessing also when nonconforming
finite elements are involved.

The principle was formulated for problems of fourth order in \cite{NR01}
and used for computing a posteriori error bonds in \cite{BHL15}.
It is based on the fact that there is no duality gap between the minimum problem
\beq
\label{maxprob}
  \frac12 \int_{\Om} (\nabla^2 w)^2dx -\int _{\Om} f w\,dx 
  \longrightarrow \min_{w \in H^2_0(\Om)} !
\eeq
and the complementary maximum problem
\begin{eqnarray}
\label{minprob}
  &&-\frac12 \int_{\Om} \tau^2dx \longrightarrow
   {\max_{\tau \in L_2(\Om)^{2\times2}_{sym} }!} \hspace{1cm}  \\[3pt]
  \text{subject to}\hspace{-0.8cm}&&\qquad \opdiv\opdiv \tau = f. \nn
\end{eqnarray}
Nevertheless, the application to elliptic problems of order four
requires special actions.

Here and throughout the paper we will apply the principle with the
differential operator $\opdiv\opdiv$ in distributional form,
\beq
\label{divdivdistr}
  \langle \opdiv\opdiv \tau, w \rangle := \int_{\Om}\tau :\nabla^2 w\,dx, \quad
  \tau \in [L_2(\Om)]^{2\times2}_{sym}, ~ w \in H^2_0(\Om).
\eeq
Although the right-hand side of the original equation \eqref{HHJ}
is assumed to be in $L_2(\Om)$,
it is essential that we have a distributional version of the principle in $H^{-2}$.
Then we can choose tensor from the Hellan--Herrmann--Johnson space
as an equilibrated moment tensor.
Obviously \eqref{maxprob} is well defined also for $f \in H^{-2}$.

\begin{theorem}
\label{theo-2en}
(Two-energies principle for the biharmonic equation)  \\  
Let  $f_h \in H^{-2}(\Om)$ and $\hu\in H^2_0(\Om)$ be the solution of the biharmonic equation
\beq
\label{weak-eq}
  \int_{\Om} \nabla^2 \hu : \nabla^2 w\, dx =\langle f_h, w\rangle
  \quad\text{for all } w\in H^2_0(\Om).
\eeq
If  $v\in H^2_0(\Om)$ and the tensor
$\sigma^{eq}_h \in [L_2(\Om)]^{2\times 2}_{sym}$ is equilibrated
in the sense that
\beq
\label{equil}
  \left< \opdiv\opdiv\sigma^{eq}_h,w \right> = \langle f_h, w\rangle
  \quad\text{for all } w \in H^2_0(\Om)
\eeq
then
\beq
\label{2-en-for}
  \int_{\Om} (\nabla^2(\hu-v))^2dx +  \int_{\Om} (\nabla^2
   \hu-\sigma^{eq}_h)^2dx
  =  \int_{\Om} (\nabla^2 v-\sigma^{eq}_h)^2dx.
\eeq
\end{theorem}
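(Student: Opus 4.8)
This is the classical Prager--Synge identity adapted to fourth order, so the plan is to expand the right-hand side of \eqref{2-en-for} by inserting $\nabla^2\hu$ and collecting terms. Write $\nabla^2 v - \sigma^{eq}_h = (\nabla^2 v - \nabla^2\hu) + (\nabla^2\hu - \sigma^{eq}_h)$ and expand the square over $\Om$. This produces the two desired squared terms plus twice the cross term
\[
  \int_{\Om} \nabla^2(v-\hu) : (\nabla^2\hu - \sigma^{eq}_h)\,dx,
\]
so everything reduces to showing this cross term vanishes.

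\medskip

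\textbf{Key step: the cross term is zero.} Since $v,\hu \in H^2_0(\Om)$, the difference $w := v - \hu$ lies in $H^2_0(\Om)$ and is therefore an admissible test function in both \eqref{weak-eq} and \eqref{equil}. Using the distributional definition \eqref{divdivdistr} of $\opdiv\opdiv$, I would write
\[
  \int_{\Om} \nabla^2 w : \nabla^2\hu\,dx = \langle f_h, w\rangle
  \quad\text{and}\quad
  \int_{\Om} \nabla^2 w : \sigma^{eq}_h\,dx
   = \langle \opdiv\opdiv\sigma^{eq}_h, w\rangle = \langle f_h, w\rangle,
\]
where the first equality is exactly \eqref{weak-eq} with test function $w$, and the middle equality in the second line is \eqref{divdivdistr} applied to $\tau = \sigma^{eq}_h$; note this requires only $\sigma^{eq}_h \in [L_2(\Om)]^{2\times2}_{sym}$, which is assumed. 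Subtracting the two lines gives $\int_{\Om}\nabla^2 w : (\nabla^2\hu - \sigma^{eq}_h)\,dx = 0$, i.e.\ the cross term vanishes. Substituting back into the expanded square yields \eqref{2-en-for}.

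\medskip

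\textbf{Remarks on the obstacle.} There is no genuine analytic difficulty here: the integrals are all well defined because $\nabla^2\hu$, $\nabla^2 v$, and $\sigma^{eq}_h$ are all in $[L_2(\Om)]^{2\times2}_{sym}$, and no integration by parts is performed in the classical sense — the operator $\opdiv\opdiv$ is used purely through its distributional pairing \eqref{divdivdistr}, which sidesteps any regularity issue with $\sigma^{eq}_h$. The only point requiring a moment's care is the bookkeeping that $w = v - \hu$ is legitimately in $H^2_0(\Om)$ so that it may be used simultaneously as test function in \eqref{weak-eq} and in the defining relation \eqref{equil}; this is immediate from $v,\hu \in H^2_0(\Om)$. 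The role of the data term $f_h$ is simply that it drops out when the two identities are subtracted, which is why the precise nature of $f_h \in H^{-2}(\Om)$ never enters.
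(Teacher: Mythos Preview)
Your proof is correct and follows essentially the same route as the paper's: both establish the orthogonality $\int_{\Om} (\nabla^2\hu - \sigma^{eq}_h) : \nabla^2(\hu - v)\,dx = 0$ by testing \eqref{weak-eq} and \eqref{equil} with $w = \hu - v$ (you use $v-\hu$, an immaterial sign change) and then invoke the binomial expansion to obtain \eqref{2-en-for}.
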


\begin{proof}
By the definition of the distribution and by the equilibration we have
\beq
\label{all-obc}
  \int_{\Om} \sigma^{eq}_h : \nabla^2 w\, dx
  = \left< \opdiv\opdiv\sigma^{eq}_h , w\right> = \langle f_h,w\rangle
   \quad\text{for all }   w \in H^2_0(\Om).
\eeq
Combining this equation with \eqref{weak-eq} we obtain with $w:=\hu-v$:
\begin{eqnarray*}
  \lefteqn {\int_{\Om} (\nabla^2 \hu-\sigma^{eq}_h): \nabla^2(\hu-v)dx }  \\
  &\quad =& \int_{\Om} \nabla^2 \hu : \nabla^2(\hu-v)dx 
   - \int_{\Om} \sigma^{eq}_h: \nabla^2(\hu-v)dx \\[4pt]
  &\quad =&\langle f_h, \hu-v\rangle -\langle f_h, \hu-v\rangle =0.
\end{eqnarray*}
This orthogonality relation and the Binomial formula yield \eqref{2-en-for}.
\end{proof}

The generalization of Theorem \ref{theo-2en} to other boundary conditions
will be described in Remark \ref{obc}.

\subsection{Error estimation using the two-energies principle}

The dominating part of the overall discretization error
will be estimated by using the two-energies principle~\eqref{2-en-for}. To this end, an equilibrated moment tensor
$\sigma_h^{eq}$
will be constructed.
As was pointed out in \cite{BHL15}, we usually get two additional terms
in a posteriori error estimates.

The finite element solution $u_h$ of
the $C^0$IPDG method is contained only in $H^1(\Om)$.
We need an $H^2$ function $v$ in order to apply Theorem \ref{theo-2en}.
An interpolation by a Hsieh--Clough--Tocher element,
by an element of the TUBA family \cite{AFS68} or by another $H^2$-function $u^{conf}$ implies an additional term $|u_h-u^{conf}|_{2,h}$.
This  term does not spoil the efficiency, since it can be bounded by terms
of residual a posteriori error estimates that are known to be 
efficient \cite{BHL15,BGS10}.

Another extra term is induced by the so-called data oscillation. For general $f \in L_2(\Omega)$, the discrete equilibrated moment tensor $\sigma^{eq}_h$ is not equilibrated with respect to $f$,
\begin{equation}
\langle \opdiv\opdiv \sigma_h^{eq}, w\rangle \not= (f, w)_{0,\Omega} \qquad \forall v \in H^2_0,
\end{equation}
but
\begin{equation}
  \langle \opdiv\opdiv \sigma_h^{eq}, w\rangle = \langle f_h, w\rangle \qquad \forall v \in H^2_0.
\end{equation}
The choice of $f_h$ will be explained in Section \ref{sec:dataosc}; so far we only mention that $f_h$ can be seen as the interpolation of $f$ to a discrete distributional space. The difference between $f \in L_2$ and $f_h = \opdiv\opdiv\sigma^{eq}_h$ constitutes the last term in the sum \eqref{a-post} below.

To be specific, let $u \in H^2_0$ denote the solution
of the given biharmonic equation, and $u_h$ be the discrete solution obtained by a DG method.
Since $u_h \notin H^2_0(\Omega)$, we estimate the error $u - u_h$ in the broken $H^2$ norm \eqref{eq:brokenH2} or the mesh-dependent DG norm \eqref{IPDGNorm1}
below, which includes jumps of the normal derivative across edges.
Inserting the interpolant of $u_h$ to an $H^2$-conforming finite element space $u_h^{conf}$ and the solution $\hat u \in H^2$ to the biharmonic equation with modified right hand side $f_h \in H^{-2}$, we obtain the following error estimate by the triangle inequality,
\begin{eqnarray} 
  \lefteqn{|u_h-u|_{2,h}  } \nn \\ 
  & \leq&
	|u_h - u^{conf}|_{2,h} + |u^{conf} - \hat u|_2 + |\hat u - u|_2 \nn\\
	& \leq& \underbrace{|u_h - u^{conf}|_{2,h}}_{\eta^{nonconf}}
   +\underbrace{\|\nabla^2 u^{conf}-\si_h^{eq}\|_{0,\Om}}_{\eta^{eq}}
  +\underbrace{ \|\opdiv\opdiv\sigma_h^{eq} - f\|_{-2}}_{\eta^{osc}}\,. \quad
 \label{a-post}
  \label{eq:errorest}
\end{eqnarray}
The  term $\eta^{eq}$ on the right-hand side of \eqref{eq:errorest} is obtained by the two-energies principle, and it
is the dominating one.
The  term $\eta^{osc}$ stems from the data oscillation as treated in Section~\ref{sec:dataosc}.
There it will be shown that the order is at least $c h^2 $.

\subsection{An improvement for nonconforming elements}
The estimate \eqref{eq:errorest} can be improved 
for nonconforming methods by a
simple consideration \cite{Stenberg}.
It is now appropriate to recall
the name hypercircle method given by Prager und Synge \cite{SYN47}.
The computation incorporates the center of the hypercircle, i.e.,
the mean value
$\sigma^{mean} := 1/2(\nabla^2 u^{conf} + \sigma^{eq})$.
The orthogonality of two sides of the triangle
in the hypercircle implies
\begin{equation}
\begin{split}
  &\|\nabla^2 \hat u -  \sigma^{mean} \|_{0,\Om}^2 =\\
 &=  \|\tfrac12(\nabla^2 \hat u -  \sigma^{eq} )
   + \tfrac12(\nabla^2 \hat u - \nabla^2 u^{conf}) \|_{0,\Om}^2 \\
 &=  \|\tfrac12(\nabla^2 \hat u -  \sigma^{eq} )
   - \tfrac12\nabla^2 (\hat u - u^{conf}) \|_{0,\Om}^2
   +\underbrace{(\nabla^2 \hat u -  \sigma^{eq}, 
   \nabla^2 (\hat u - u^{conf}))_{0,\Om}}_{=0}
   \\
 & = \|\tfrac12 ( \nabla^2 u^{conf}-\sigma^{eq})\|_{0,\Om}^2
   \,.
\end{split}
\label{meansig}
\end{equation}
Now the auxiliary point in the triangle equality \eqref{eq:errorest} will be $\sigma^{mean}$
instead of $\nabla^2 u^{conf}$, and \eqref{meansig} is used. 
We obtain the improved error estimate
\begin{equation}
\label{eq:errorest_imp}
\begin{split}
 &|u_h - u|_{2,h}  \le \\&\leq  \|\nabla_h^2 u_h - \sigma^{mean} \|_{0,\Om}
   + \|\sigma^{mean} - \nabla^2 \hat u\|_{0,\Om} +  |\hat u - u|_2 \\
	&\leq \ \underbrace{ \|\nabla_h^2 u_h - \sigma^{mean} \|_{0,\Om}
     }_{\eta^{mean}} + 
		\frac12 \underbrace{  \|\nabla^2 u^{conf} - \sigma^{eq}\|_{0,\Om}}
     _{\eta^{eq}} + \underbrace{ \|\opdiv\opdiv\sigma_h^{eq} - f\|_{-2}}_{\eta^{osc}}.
\end{split}
\end{equation} 
The sketch on the right hand side of Figure~\ref{fig:ee} 
and the triangle inequality $\eta^{mean} \le \eta^{nonconf}+\frac12 \eta^{eq}$
ensure that the 
estimate \eqref{eq:errorest_imp} is at least as good as the original one \eqref{eq:errorest}.

\begin{figure}
\begin{center}
\includegraphics[width=0.4\textwidth]{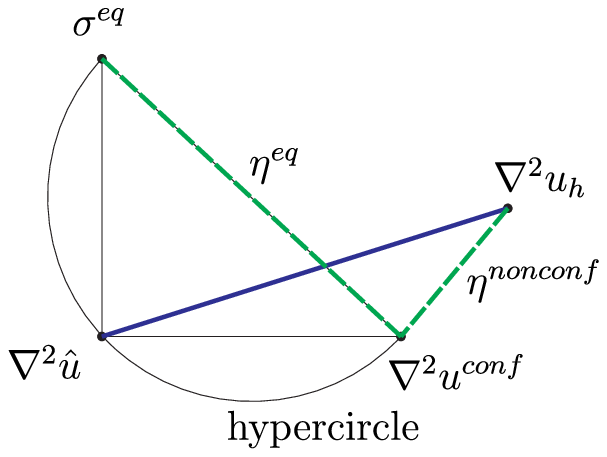}
\hspace{0.05\textwidth}
\includegraphics[width=0.4\textwidth]{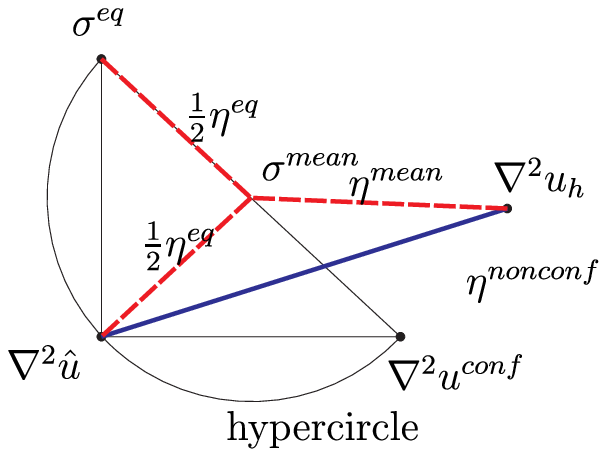}
\end{center}
\vspace{-0.2cm}
\caption{Error estimation using the hypercircle method. \break
Left: original estimate, right: improved estimate}
\label{fig:ee}
\end{figure}

\section {Discretization of the biharmonic equation}
\label {C0DG}
\subsection{The $C^0$IPDG method} \label{C0IPDG}

%
A popular way for the numerical treatment of the biharmonic equation is the
interior penalty ($C^0$IPDG) method; see, e.g., \cite{BGS10, FHP15, SUM07}.
We assume that $f\in L_2(\Om)$.
Given $k\ge 2$, the DG method uses the polynomial finite element spaces
\beq
  \label{DGSpace1}
    V_h := \{ v_h \in C^0(\Om) \ | \ v_h|_T\in P_k(T) , \ T \in  \cT_h \}
\eeq
and $\Vnull := V_h \cap H^1_0(\Om)$.

The DG bilinear form
$A_h(\cdot,\cdot) :  V_h \times V_h \rightarrow \setR$ contains a penalty term
with a sufficiently large penalty parameter $\al$,
\begin{align}
\label{IPDGBilForm1}
   A_h(u_h,v_h) & := \sum\limits_{ T \in \cT_h} 
    \int_ T  \nabla^2 u_h : \nabla^2 v_h \,dx     \\
    \nn
  & -\sum\limits_{E \in \cE_h}
 \int\limits_E \Big( \jump{ \dn u_h} \mean{ \nabla^2 v_{h,nn} } \ 
  + \mean{ \nabla^2 u_{h,nn}}\jump{ \dn v_h}  \ \Big) \,ds  \\
 \nn
 &  +\sum\limits_{E \in \cE_h} \int\limits_E \frac{\alpha}{h_E} \ 
   \jump{ \dn u_h } \, \jump{ \dn v_h } \,ds  .
\end{align}
The variational formulation reads.
{\em Find $u_h\in \Vnull$ such that}
\beq
\label{Fe-sol}
  A_h(u_h,v_h) = (f,v_h)_{0,\Om} \quad\text{for all ~} v_h\in \Vnull\,.
\eeq
The discretization error will be measured by the mesh-dependent DG norm on $\Vnull + H_0^2(\Om)$,
\begin{align}
 \label{IPDGNorm1}
   \| v \|^2_{DG}
  := & \ \sum\limits_{T \in \cT_h(\Om)} \| \nabla^2 v \| ^2_{0,T}
  +  \sum\limits_{E \in \cE_h(\bar{\Om})} \frac{\alpha}{h_E} \ \| \, \jump{\dn v} \|^2_{0,E}.
 \end{align}
It is well known that 
there exists a positive constant $\gamma$ such that
\begin{align}
 \label{Ellipticity1}
  A_h(v_h,v_h) \ge \gamma \ \| v_h \|^2_{DG}  \quad v_h \in \Vnull \,,
\end{align}
provided that the penalty parameter $\alpha= O((k+1)^2)$ is sufficiently large.
The bilinear form is also bounded
$
 |A_h(v_h,w_h)| \le c\|v_h\|_{DG} \|w_h\|_{DG}.
$
For the convergence analysis we refer, e.g., to \cite{BGS10, FHP15, SUM07}.

\subsection{The deflection space $V_h$ and its dual}

We will consider degrees of freedom of the deflection space $V_h$ in
\eqref{DGSpace1} that guarantee global continuity of the piecewise polynomials
\begin{subequations}
\begin{eqnarray}
\label{basisduala}
  v_h(x), &~&  x\in\cV_h, \\[3pt]
\label{basisdualb}
  \int_E v_hq\,ds, &&  q\in P^{k-2}(E), ~ E\in \cE_h, \\
\label{basisdualc}
   \int_T v_hq\,dx, &&  q\in P^{k-3}(T), ~ T\in \cT_h. 
\end{eqnarray}
\end{subequations}
These degrees of freedom span the dual space 
\begin{equation}
  V_h^* = \mbox{span}\,(\text{functionals on $V_h$ in 
   \eqref{basisduala}-\eqref{basisdualc}}). \label{eq:defVh*}
\end{equation}
The linear independence may be shown by
proceeding from the vertices to the edges and then to the triangles.
The procedure is elucidated for the analogous three-dimensional case
in the proof of \cite[Lemma 5.47]{MON03}. The degrees of freedom of $\Vnull$ are those functionals in \eqref{basisduala} - \eqref{basisdualc} that are associated with $T \in \cT_h$, $E \in \cE_h^0$ and $V \in \cV_h^0$. These degrees of freedom span the dual space $(\Vnull)^*$. 

An interpolation operator $I_h : H^2(\Om) \to V_h$ is defined for these
degrees of freedom by the conditions; cf. \cite[Proposition 3.2]{COM80},
\begin{eqnarray}
\label{I_h}
  I_h v(x) =v(x) \qquad&~&  x\in\cV_h, \nn\\[3pt]
  \int_E I_h vq\,ds = \int_E  vq\,ds, &&  q\in P^{k-2}(E), ~ E\in \cE_h, \\
   \int_T I_hvq\,dx = \int_T vq\,dx , &&  q\in P^{k-3}(T), ~ T\in \cT_h. \nn
\end{eqnarray}
Obviously, the interpolation operator acts in a local way, and maps $H^2_0(\Omega) \to \Vnull$.
The following local estimate of the interpolation error is well known,
\beq
  \|v - I_h v\|_{0,T} \leq c h_T^{2} \,\|\nabla^2 v\|_{0,T}.	\label{eq:interrorl2}
\eeq


\section {Equilibration}
\label{sec-eq}
The design for determining an equilibrated moment tensor $\sigma_h^{eq} \in M_h$ satisfying
\begin{equation}
\label{equilibratedtau}
  \left< \opdiv\opdiv \si^{eq}_h, v_h\right> = (f,v_h)_{0,\Om}
  \quad   \forall v_h\in \Vnull,
\end{equation}
in distributional sense is 
our first aim and
the main task of this section. The computation will  be done explicitly by a local
postprocessing, but Theorem~\ref{theo-2en} indicates already
that it is done on a different basis than for the IPDG method
in \cite{BHL15}.

We want to find $\sigma^{eq}_h$ such that $\langle \opdiv\opdiv\sigma^{eq}_h, v\rangle$ can be evaluated for less smooth $v \in H^2_0 + \Vnull$.
To this end, we propose to use the finite element space that
is often found in connection with the HHJ method;
see e.g.\ \cite{AB85,COM80,KZ14}.
This space $M_h$ consists of symmetric piecewise polynomial  tensor fields of order $k-1$ with continuous normal-normal component $\tau_{h,nn} = n^T \tau_h n$,
\begin{align}
\label{DGSpace2a}
  M_{h} := \{ \tau_{h} \in  [L_2(\Om)]^{2 \times 2}_{sym} \ |
      & \ \tau_{h}|_T \in [P^{k-1}(T)]^{2 \times 2}_{sym} , \, T\in \cT_h,
     \\
      & \tau_{h,nn} \text{~ is continuous at interelement boundaries}  \} . \nn
\end{align}
Note that the sign of the normal-normal component $\tau_{h,nn}$ does not depend on the orientation of the normal vector.
Comodi \cite[Proposition 3.1]{COM80} presents the following degrees of freedom for the space $M_h$ that take into account the continuity of the normal-normal components
on interelement boundaries.

\begin {lemma}
\label{lem1}
Each $\tau_h \in M_h$ is uniquely defined by the quantities
\beq
  \begin{array}{ll}
  \int_E  \tau_{h,nn}  q_E\,ds,~  & q_E  \in P^{k-1}(E),~ E \in \cE_h, \\[8pt]
   \int_T \tau_h:q_T\, dx,        & q_T \in [P^{k-2}(T)]^{2\times 2}_{sym},~ T\in\cT_h.
\end{array}
\eeq
\end{lemma}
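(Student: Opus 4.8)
The statement is a standard finite-element unisolvence result, so the plan is the usual two-part argument: count the degrees of freedom against the dimension of $M_h$ restricted to a single element, then show that a tensor field in $M_h$ on which all the listed functionals vanish must be identically zero. First I would verify the dimension count. On a fixed triangle $T$, the space $[P^{k-1}(T)]^{2\times2}_{sym}$ has dimension $3\binom{k+1}{2} = \tfrac{3}{2}k(k+1)$. The element interior functionals against $[P^{k-2}(T)]^{2\times2}_{sym}$ number $3\binom{k}{2} = \tfrac{3}{2}k(k-1)$, and the three edges each contribute $\dim P^{k-1}(E) = k$ functionals associated with $\tau_{h,nn}$, giving $3k$. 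Since $\tfrac{3}{2}k(k-1) + 3k = \tfrac{3}{2}k(k+1)$, the local counts match, so it suffices to prove that the functionals are linearly independent (equivalently, that vanishing of all of them forces $\tau_h = 0$ locally).

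Next I would carry out the local injectivity argument on a single triangle $T$. Suppose $\tau_h|_T$ annihilates all the listed functionals that involve $T$. The edge functionals say that $\int_E \tau_{h,nn}\, q_E\,ds = 0$ for all $q_E \in P^{k-1}(E)$; since $\tau_{h,nn}|_E$ is itself a polynomial of degree $\le k-1$ on $E$, this forces $\tau_{h,nn} = 0$ on each of the three edges of $T$. Consequently the quadratic form $n^T \tau_h n$ vanishes on $\partial T$, which (by the standard argument: write $\tau_{h,nn}$ on an edge in terms of the barycentric coordinate vanishing on that edge) shows $\tau_h = \lambda_1 \lambda_2 \lambda_3\, S$ for a single constant symmetric tensor $S$ — wait, more carefully: $\tau_{h,nn}$ vanishing on edge $E_i$ means $\tau_h$ restricted suitably is divisible by $\lambda_i$, and combining the three edges one obtains that each component, after subtracting a correction, lies in $\lambda_1\lambda_2\lambda_3 \cdot [P^{k-4}]$ plus bubble-type terms; the cleanest route is to observe $\tau_h$ can be written as $\sum_i \lambda_j\lambda_k\, \tau^{(i)}$ with edge-associated pieces plus a fully interior part. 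The point to nail down is that once $\tau_{h,nn} = 0$ on all of $\partial T$, the remaining tensor is ``interior enough'' that testing it against all of $[P^{k-2}(T)]^{2\times2}_{sym}$ in the element integral $\int_T \tau_h : q_T\,dx = 0$ forces $\tau_h = 0$. I would make this precise by choosing $q_T = \tau_h$ itself if $\tau_h \in [P^{k-2}]^{2\times2}_{sym}$, or more generally exploiting that $\tau_h$ — having all three scalar quantities $n_i^T \tau_h n_i$ vanishing on $\partial T$ — factors through a product of the three barycentric coordinates in an appropriate sense and hence pairs with enough test tensors of degree $k-2$ to conclude.

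Finally I would assemble the global statement: linear independence and the correct count on each element, together with the fact that the only shared functionals between adjacent elements are the edge functionals on the common edge $E$ (which are well-defined precisely because $\tau_{h,nn}$ is continuous across $E$, by the definition of $M_h$), show that prescribing all the quantities in the lemma determines $\tau_h$ uniquely element by element in a consistent way. The main obstacle is the middle step — cleanly showing that vanishing normal-normal traces on $\partial T$ plus vanishing of all interior moments against $[P^{k-2}(T)]^{2\times2}_{sym}$ kills $\tau_h$; the tensor-valued bookkeeping of which polynomial multiples of barycentric coordinates appear is fiddlier than in the scalar Lagrange case, and one must be careful that the three edge conditions on the single scalar $\tau_{h,nn}$ constrain all three components of the symmetric tensor in a compatible way. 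I would expect to lean on Comodi's construction in \cite[Proposition 3.1]{COM80}, citing it for the detailed polynomial algebra while presenting the dimension count and the conceptual skeleton here.
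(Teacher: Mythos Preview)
The paper does not actually prove this lemma; it simply attributes the result to Comodi \cite[Proposition 3.1]{COM80} in the sentence immediately preceding the statement and moves on. Your proposal therefore goes well beyond what the paper does: you give the correct dimension count, correctly reduce to local injectivity, and correctly identify that the edge moments force $\tau_{h,nn}=0$ on $\partial T$. You also rightly flag the delicate step---showing that vanishing normal-normal traces together with vanishing interior moments against $[P^{k-2}(T)]^{2\times2}_{sym}$ kills $\tau_h$---and your plan to defer that polynomial-algebra computation to Comodi is exactly what the paper does (only the paper defers \emph{everything} to Comodi).

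One small caution on your sketch: the suggestion ``choose $q_T = \tau_h$ itself if $\tau_h \in [P^{k-2}]^{2\times2}_{sym}$'' never applies, since $\tau_h$ has degree $k-1$; and the phrase ``factors through a product of the three barycentric coordinates'' overstates what the edge conditions give you---they only say that each scalar $n_i^T \tau_h\, n_i$ is divisible by the single barycentric coordinate $\lambda_i$, not that the whole tensor is divisible by $\lambda_1\lambda_2\lambda_3$. The actual argument (in Comodi) expresses $\tau_h$ in the basis $\{\nabla\lambda_i \otimes \nabla\lambda_j\}$, uses the three divisibility relations, and then constructs a suitable $q_T$ of degree $k-2$ to test against. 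Since you already planned to cite Comodi for precisely this step, there is no real gap.
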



Let $\tau_h \in M_h$ and $w \in H^2_0(\Om)$, then by definition \eqref{divdivdistr}
\begin{equation}
\begin{split}
 \langle \opdiv\opdiv \tau_h, w \rangle
  &=\ \int_\Omega \tau_h : \nabla^2 w\, dx  \\
  &=\ \sum_{T \in \cT_h}  \int_T \tau_h : \nabla^2 v_h\, dx
  { - \sum_{E\in\cE_h} \int_E \tau_{h,nn} \left\jump{\partial_n w\right} \,} ds. 
\end{split}\label{eq:defdivdiv_eps}
\end{equation}
Note that the jump terms $\jump{\partial_n w}$  in \eqref{eq:defdivdiv_eps} vanish for $w \in H^2_0(\Om)$,
but they are relevant for an extension to $H^2_0(\Om)+\Vnull$.

We will use the degrees of freedom \eqref{eq:defdivdiv_eps} for the construction of equilibrated moment tensors.
%
Let $u_h$ be the solution of the finite element equation \eqref{Fe-sol},
i.e., the solution of the $C^0$IPDG method.
By Lemma \ref{lem1} there exists $\si^{eq}_h \in M_h$ such that for
each $T\in \cT_h$, 
\begin{equation}
\label{constr-eq-DG}
\begin{split}
   \si^{eq}_{h,nn} & =  \mean{ \nabla^2 u_{h,nn} } -\tfrac{\alpha}{h}\, \jump{\partial_n   u_h}
         \qquad\in P^{k-1}(E), E\subset \dT,\\[6pt]
  \ds \int_T \si^{eq}_h : q_T\,dx & =  \ds \int_T \nabla^2 u_h : q_T \,dx
  - \sum_{E \subset \partial T} \int_{E} \gamma_E\jump{\partial_n u_h}\, q_{T,nn}\,ds \\ 
	& \hspace{6cm} \forall q_T \in [P^{k-2}(T)]^{2\times 2}_{sym}.
	\end{split}
\end{equation}
In the second line on \eqref{constr-eq-DG}, the factor $\gamma_E$ equals $\gamma_E = 1/2$ for an interior edge $E \in \cE_h^0$, and $\gamma_E = 1$ for a boundary edge $E \subset \Gamma$. 
We insert the equations \eqref{constr-eq-DG} into  \eqref{eq:defdivdiv_eps}
after setting piecewise $q_T:=\nabla^2 v_h$. The choice of $\gamma_E$ ensures that after an edge-wise reordering of boundary integrals in the second line of \eqref{divdiv-f-2} we obtain the integrand $\left\jump{ \partial_n u_h\right}\mean{\nabla^2 v_{h,nn}}$, 
\begin{equation}
\label{divdiv-f-2}
\begin{split}
&\langle \opdiv\opdiv \si^{eq}_h, v_h \rangle
  = \sum_{T \in \cT_h}  \int_{T} \si^{eq}_{h}:\nabla^2 v_h\,dx 
   - \sum_{E \in \cE_h} \int_{E} \si^{eq}_{h,nn} \left\jump{\partial_n v_h\right} \,ds\\
  &= \sum_{T \in \cT_h} \Big( \int_{T} \nabla^2 u_h:\nabla^2 v_h\,dx 
    -  \sum_{E \subset \partial T} \int_{E} \gamma_E\left\jump{ \partial_n u_h\right}
   \nabla^2 v_{h,nn} ds \Big) \quad \\
  &\quad -\sum_{E \in \cE_h} \int_{E} \Big( \mean{\nabla^2u_{h,nn}}
    - \frac{\al}{h} \left\jump{ \partial_n u_h\right} \Big) \left\jump{ \partial_n v_h\right} \, ds  \\
   &= A_h(u_h,v_h)  = (f,v_h)_{0,\Om}.
\end{split}
\end{equation}
The last equality is due to the fact that $u_h$ satisfies the DG equation \eqref{Fe-sol} for all $v_h \in \Vnull$.
It follows from \eqref{divdiv-f-2} that the first aim \eqref{equilibratedtau}
is achieved.

Now we consider the question: for which system is $\si_h^{eq}$
an equilibrated moment tensor? 
For answering this question we set $f_h := \opdiv\opdiv \si_h^{eq}$,
more precisely
\beq
\label{def-f_h}
  \langle f_h, w\rangle =  \langle\opdiv\opdiv \si_h^{eq},w\rangle
  \quad\text{for all } w \in H^2_0(\Om).
\eeq
%
Then $\si_h^{eq}$ is an equilibrated tensor for the
biharmonic equation with the right-hand side $f_h$ by definition.
The next lemma is devoted to a further representation of the double divergence
operator, which indicates that $f_h = \opdiv\opdiv\si_h^{eq}$ lies in the finite-dimensional dual space $(\Vnull)^*$ of the deflection space $\Vnull$. 
This fact will be used to estimate the data oscillation in Section~\ref{sec:dataosc}.

\begin{lemma} 
\label{lem:divdivmwh}
The distributional double divergence operator
$
  \opdiv\opdiv: M_h \longrightarrow (\Vnull)^*
$
which is defined by \eqref{eq:defdivdiv_eps}
is well defined, and there is a representation of the form
\beq
\label{form}
  \langle \opdiv\opdiv \tau_h, v \rangle = \sum_{V \in \cV_h^0} f^{(V)}_\tau v(V)
  + \sum_{E \in \cE_h^0} \int_E f^{(E)}_\tau v\, ds + \sum_{T \in \cT_h} \int_T f^{(T)}_\tau v\, dx
\eeq
with $f^{(V)}_\tau \in \mathbb R$, $f^{(E)}_\tau \in P^{k-2}(E)$
and $f^{(T)}_\tau \in P^{k-3}(T)$.
It contains only evaluations of $v$, but no derivatives of $v$.
The equations \eqref{eq:defdivdiv_divI} and \eqref{ingredients} below
provide equivalent  extensions to all $v \in H^2_0(\Om)+\Vnull$.
\end{lemma}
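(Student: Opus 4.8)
The plan is to start from the defining formula \eqref{eq:defdivdiv_eps} for $\langle \opdiv\opdiv \tau_h, w\rangle$ with $\tau_h \in M_h$, namely
\[
  \langle \opdiv\opdiv \tau_h, w \rangle
  = \sum_{T \in \cT_h} \int_T \tau_h : \nabla^2 w\, dx
   - \sum_{E\in\cE_h} \int_E \tau_{h,nn} \jump{\partial_n w} \,ds,
\]
and to integrate by parts twice on each element $T$, moving both divergences off $\tau_h$ and onto $w$ via $\opdiv$ at the cost of boundary terms on $\dT$. Since $\tau_h|_T$ is a polynomial, $\opdiv\opdiv(\tau_h|_T)$ is a polynomial of degree $k-3$ on $T$, which will contribute the volume term $\int_T f^{(T)}_\tau v\,dx$ with $f^{(T)}_\tau = \opdiv\opdiv(\tau_h|_T) \in P^{k-3}(T)$. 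The boundary terms produced by the two integrations by parts involve $\tau_n \cdot \nabla w$ and $(\opdiv \tau_h) \cdot n\, w$ on each edge; these must be recombined edge-by-edge. The key structural point is that the $\nabla w$-terms split into a normal-normal part $\tau_{nn}\partial_n w$ and a normal-tangential part $\tau_{nt}\partial_t w$. The $\tau_{nn}\partial_n w$ contribution is designed to cancel exactly against the last sum in \eqref{eq:defdivdiv_eps} up to the jump structure: because $\tau_{h,nn}$ is continuous across interior edges, the two one-sided contributions of $\tau_{nn}\partial_n w$ from $T_1(E)$ and $T_2(E)$ combine with the $-\int_E \tau_{h,nn}\jump{\partial_n w}$ term to leave nothing on interior edges (and the boundary edges drop because $w \in H^2_0$). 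The $\tau_{nt}\partial_t w$ term is then integrated by parts along each edge $E$, producing $-\int_E \partial_t(\tau_{nt}) w\,ds$ plus endpoint (vertex) contributions $[\tau_{nt} w]$ at the two endpoints of $E$.

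Next I would collect, for a fixed interior edge $E$, all contributions that land as an integral $\int_E (\cdot)\, w\, ds$: the jump of the one-sided $(\opdiv\tau_h)\cdot n$ across $E$, and the edgewise tangential-derivative term $\partial_t \tau_{nt}$ summed over the two adjacent triangles (again using that the orientation bookkeeping, as the paper already warns, makes the result orientation-independent). This defines $f^{(E)}_\tau$ as an element of $P^{k-2}(E)$: indeed $(\opdiv\tau_h)\cdot n|_E \in P^{k-2}(E)$ since $\opdiv(\tau_h|_T) \in [P^{k-2}(T)]^2$, and $\partial_t\tau_{nt}|_E \in P^{k-2}(E)$ as well. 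Similarly, for each interior vertex $V$, I would collect all the endpoint terms $\tau_{nt}w(V)$ coming from every edge incident to $V$ (each edge contributing with a sign according to whether $V$ is its initial or terminal vertex) together with the corresponding contributions generated inside each triangle around $V$; summing these over the edge-fan around $V$ yields a single real number $f^{(V)}_\tau$, and the value $w(V)$ factors out. On boundary edges and boundary vertices every such term is multiplied by $w$, $\partial_n w$, $\partial_t w$ or $w(V)$, all of which vanish for $w \in H^2_0(\Om)$, so only interior edges and interior vertices survive; this is exactly the claimed form \eqref{form}. That the operator is well defined on $(\Vnull)^*$, i.e. independent of representative, is automatic because \eqref{eq:defdivdiv_eps} already defines a genuine functional on $H^2_0(\Om)$ and the right-hand side of \eqref{form} only uses the point values and the $P^{k-2}(E)$-, $P^{k-3}(T)$-moments that constitute the degrees of freedom of $\Vnull$ from \eqref{basisduala}--\eqref{basisdualc}; restricted to $\Vnull$ this pairs $\opdiv\opdiv\tau_h$ against precisely those functionals, so it factors through $(\Vnull)^*$. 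Finally, since all manipulations were integrations by parts valid for any piecewise-$H^2$ argument, formula \eqref{form} extends verbatim to $v \in H^2_0(\Om) + \Vnull$, which is the assertion about \eqref{eq:defdivdiv_divI} and \eqref{ingredients}; the absence of derivatives of $v$ in \eqref{form} is manifest from the construction.

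The main obstacle, and the step I would spend the most care on, is the bookkeeping of signs and orientations in the reassembly of the edge and vertex contributions: each interior edge is shared by two triangles whose outward normals are opposite, each edge has a fixed orientation from $V_1(E)$ to $V_2(E)$ that may disagree with the tangential direction inherited from $T_1(E)$ versus $T_2(E)$, and the vertex terms arise as endpoint evaluations of line integrals along several edges meeting at $V$ with alternating signs. One must verify that, after all cancellations, (i) the normal-normal pieces annihilate against the explicit jump term in \eqref{eq:defdivdiv_eps} using continuity of $\tau_{h,nn}$, (ii) what remains on each interior edge is a well-defined scalar density in $P^{k-2}(E)$ multiplying $w$, and (iii) the vertex sum telescopes into a single coefficient. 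A clean way to organize this is to fix once and for all the convention from the Notation section (normal and tangent of $E$ equal to those of $T_1(E)$), rewrite every triangle-boundary integral in terms of that edge-based frame before summing, and check the two-triangle combination on a generic interior edge; the boundary edges then require only the observation that $w, \nabla w$ vanish. Everything else — the two integrations by parts, the identification of polynomial degrees, and the extension to $H^2_0+\Vnull$ — is routine once the orientation conventions are pinned down.
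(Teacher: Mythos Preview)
Your proposal is correct and follows essentially the same approach as the paper: two integrations by parts on each element together with a tangential integration by parts along each edge, using the continuity of $\tau_{h,nn}$ to cancel the normal-normal edge contributions against the jump term in \eqref{eq:defdivdiv_eps}, and then collecting the remaining element, edge, and vertex terms into the form \eqref{form}. The paper organizes the computation by doing one element-wise integration by parts first, recording the intermediate identity \eqref{eq:defdivdiv_divI}, and only then performing the second element-wise and the edge-wise integrations by parts simultaneously; your version does both element-wise integrations first and the edge integration afterward, but the substance and the sign/orientation bookkeeping you flag as the main obstacle are identical.
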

\begin{proof}
Let $\tau_h \in M_h$ and $v \in H^2_0(\Om)+V_h^0$.
We start from \eqref{eq:defdivdiv_eps},
and partial integration yields
\begin{equation} \label{divdiv1}
\begin{split}
  \left< \opdiv\opdiv \tau_h, v\right> 
  =&\ \sum_{T \in \cT_h} \left( -\int_T \opdiv \tau_h \cdot \nabla v \, dx  
 +\int_{\partial T} \tau_{h,n} \cdot \nabla v\, ds \right)  \\
   &\   - \sum_{E\in\cE_h} \int_E \tau_{h,nn} \left\jump{\partial_n v\right} \, ds.
\end{split} 
\end{equation}
We split  $\tau_{h,n} = \tau_{h,nt} t + \tau_{h,nn} n$ and observe by reordering the boundary integrals on $\partial T$ edge-wise, using the continuity of $\tau_{h,nn}$ and $\partial_t v$
\begin{equation}
\sum_{T \in \cT_h}\int_{\partial T} \tau_{h,n} \cdot \nabla v\, ds =
\sum_{E \in \cE_h} \int_E (\tau_{h,nn} \left\jump{\partial_n v\right} + \jump{\tau_{h,nt}}\, \partial_t v)\, ds.
\label{boundintegrals}
\end{equation}
Using \eqref{boundintegrals} in \eqref{divdiv1}, we see that the edge integrals containing $\tau_{h,nn}$ cancel. Moreover, $\partial_t v = 0$ on $\Gamma$ for $w \in H^2_0 + \Vnull$, thus we can restrict the sum to edges $E \in \cE_h^0$ in the interior of $\Omega$,
\beq
\langle \opdiv\opdiv \tau_h, v\rangle 
	= -\sum_{T \in \cT_h}\int_T \opdiv \tau_h \cdot \nabla v\, dx
	+ \sum_{E \in \cE_h^0} \int_{E}  \jump{\tau_{h,nt}}\, \partial_t v\, ds .  
\label{eq:defdivdiv_divI}
\eeq
In the next step, integration by parts is performed on each element $T$ and on each edge $E$, 
\begin{eqnarray*}
  \langle \opdiv\opdiv \tau_h, v\rangle 
  &=& \sum_{T \in \cT_h} \left(\int_T \opdiv\opdiv \tau_h \, v \, dx  - 
  \int_{\partial T} (\opdiv \tau_h) \cdot \nE \, v\,ds \right) \quad\\
  & &+ \sum_{E \in \cE_h^0} \left( -\int_{E} \jump{\partial_t\tau_{h,nt}} \, v \, ds 
 +\left(\jump{\tau_{h,nt}}\,v\right)|_{V_1(E)}^{V_2(E)} \right).
\end{eqnarray*}
Collecting element, edge, and vertex terms gives the desired representation.
\begin{subequations}
\label{ingredients}
\begin{eqnarray}
  \langle \opdiv\opdiv \tau_h, v\rangle   \label{eq:divdivmh} 
  &=& \sum_{T \in \cT_h} \int_T
  \underbrace{\opdiv\opdiv \tau_h}_{\in P^{k-3}(T)} \, v \, dx  \label{eq:divdivmT}\\
  &+ & \sum_{E \in \cE_h^0} \int_{E}
  \underbrace{\left\jump{ -\partial_t \tau_{h,nt}
  - (\opdiv \tau_h) \cdot \nE  \right}}_{\in P^{k-2}(E)} v\,ds  \label{eq:divdivmE}\\
  &+& \sum_{V \in \cV_h^0} \underbrace{\sum_{E \supset V} \delta(E,V)\jump{\tau_{h,nt}(V)}}_{\in \mathbb R} v(V). \label{eq:divdivmV}
\end{eqnarray}
\end{subequations}
In the last line $\delta(E,V)$ is a factor of $\pm 1$, evaluating to $+1$ if the vertex $V$ is the second vertex $V_2(E)$ of the oriented edge $E$, or to $-1$, if $V$ is the first vertex $V_1(E)$. One may take this sum as the jump of the jumps of the normal-tangential component of $\tau_h$ in vertex $V$ times the unique value of $v(V)$. Again, one can see that the product of $\delta(E,V)$ and $\jump{\tau_{h,nt}}$ does not depend on the orientation of $E$.

This representation fits with the degrees of freedom given in \eqref{basisduala}-\eqref{basisdualc}, therefore $\opdiv\opdiv\tau_h \in (\Vnull)^*$.
Eventually we observe that \eqref{ingredients} can be evaluated also for $v \in \Vnull \not \subset H^2_0$.
Equ. \eqref{ingredients} and \eqref{eq:defdivdiv_eps}
provide the same extension of $\langle \opdiv\opdiv \tau_h, v\rangle $ for $v \in H^2_0(\Om)+\Vnull$.
\end{proof}

\section{Data oscillation} \label{sec:dataosc}

Since the numerical solution of the equilibration condition
$\opdiv\opdiv\si_h^{eq}=f$ belongs to a finite dimensional space,
we obtain only an exact solution for a modified right-hand side $ f_h$.
Usually  this function is an $L_2$ projection of $f$ onto piecewise polynomial
functions of lower degree, see, e.g., \cite{AIR10, BFH14, BHL15}.
A similar effect is well known for residual a posteriori error estimates;
c.f., \cite{BGS10, GHV11} or \cite[p.60]{VER13},
where it is known as {\em data oscillation} for a long time.
Here, \eqref{def-f_h} shows that the discretization yields
a projection onto $(\Vnull)^*$, 
and a duality technique will be useful.


We apply \eqref{form} to $\tau_h:=\si_h^{eq}$.
It follows from Lemma~\ref{lem:divdivmwh} and the definition \eqref{I_h} of the interpolation operator $I_h$
that for $w \in H^2_0(\Omega)$
\begin{eqnarray}
\label{f_h-I_h}
    \langle f_h, I_h w\rangle &=& \sum_{V \in \cV_h^0} f^{(V)}_{\sigma_h^{eq}} I_h w(V)
 + \sum_{E \in \cE_h^0} \int_E f^{(E)}_{\sigma_h^{eq}} I_h w\, ds
 +  \sum_{T \in \cT_h} \int_Tf^{(T)}_{\sigma_h^{eq}} I_h w\, dx \nn\\ 
 &=& \sum_{V \in \cV_h^0} f^{(V)}_{\sigma_h^{eq}} w(V)
 + \sum_{E \in \cE_h^0} \int_E f^{(E)}_{\sigma_h^{eq}} w\, ds
 +  \sum_{T \in \cT_h} \int_Tf^{(T)}_{\sigma_h^{eq}} w\, dx \\
 &=&  \langle f_h,  w\rangle .\nn
\end{eqnarray}
Moreover, let $\bar f$ denote the $L_2$ projection of $f$ onto the (discontinuous) space of piecewise polynomials of degree $k-3$ in $\cT_h$,
i.e., two different projections are involved. In the lowest order case of $k = 2$, we set $\bar f = 0$.
Since $ \bar f  \in (\Vnull)^*$, similarly as in \eqref{f_h-I_h}
we see that
\beq
\label{f_k-3-Ih}
  (\bar f , I_h w)_0 = (\bar f ,  w)_0
  \quad\text{for all } w \in H^2_0(\Om).
\eeq

Let $\hat u$ denote the solution of the biharmonic equation with the
modified right hand side $f_h \in H^{-2}$,
\begin{equation}
    \int_{\Om} \nabla^2 \hat u : \nabla^2 v \,dx = \left< f_h,v\right>
  \quad\text{ for all~} v\in H^2_0(\Om). \label{eq:hatu}
\end{equation}
Then, $\sigma_h^{eq}$ is an equilibrated tensor for the solution $\hat u$.
For completing the analysis we 
estimate the  error $\eta^{osc} = \|\nabla^2(u-\hat u)\|_{0,\Om}$ that arises from
the data oscillation.

\begin{lemma}
\label{lem:data-osc}
Let $f \in L_2(\Om)$, $\bar f$ the element-wise $L^2$ projection of $f$ as above, and set $f_h = \opdiv\opdiv\si_h^ {eq}$. Let $u \in H^2_0$ denote the solution to the biharmonic problem \eqref{weak}, and $\hat u$ be the solution to the modified problem \eqref{eq:hatu}. Then the difference between $u$ and $\hat u$ is bounded by
\begin{equation} 
\label{u-hatu}
  \eta^{osc} = |u - \hat u|_2 = \|\opdiv\opdiv\sigma_h^{eq}-f\|_{-2}
   \leq c \left(\sum_{T \in \cT_h} h_T^{4}\|f- \bar f \|_{0,T}^2\right)^{1/2}. 
\end{equation} 
\end{lemma}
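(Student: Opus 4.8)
The plan is to estimate $\|\opdiv\opdiv\sigma_h^{eq}-f\|_{-2}$ by a duality argument. By definition of the $H^{-2}$ norm and the weak formulations \eqref{weak}, \eqref{eq:hatu}, we have $|u-\hat u|_2 = \|\opdiv\opdiv\sigma_h^{eq}-f\|_{-2} = \sup_{w\in H^2_0(\Om),\,|w|_2=1}\langle f_h - f, w\rangle$, since $\langle f_h,w\rangle = \langle\opdiv\opdiv\sigma_h^{eq},w\rangle$ for all $w\in H^2_0$ by \eqref{def-f_h}. So the whole task reduces to bounding $\langle f_h-f,w\rangle$ for an arbitrary $w\in H^2_0(\Om)$ in terms of $|w|_2$ and the local data-oscillation quantities $h_T^2\|f-\bar f\|_{0,T}$.

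The key trick is to insert the interpolant $I_h w$. First I would write
\[
  \langle f_h - f, w\rangle = \underbrace{\langle f_h, w - I_h w\rangle}_{(\mathrm{I})}
   + \underbrace{\langle f_h, I_h w\rangle - (f, I_h w)_0}_{(\mathrm{II})}
   + \underbrace{(f, I_h w - w)_0}_{(\mathrm{III})}.
\]
For term (I): since $f_h \in (\Vnull)^*$ has the representation \eqref{form} and $I_h w$ reproduces exactly the degrees of freedom \eqref{basisduala}–\eqref{basisdualc} that define this dual space, we get $\langle f_h, w - I_h w\rangle = 0$ by \eqref{f_h-I_h}. For term (II): the local definition \eqref{constr-eq-DG} of $\sigma_h^{eq}$ was set up precisely so that $\langle\opdiv\opdiv\sigma_h^{eq}, v_h\rangle = A_h(u_h,v_h) = (f,v_h)_0$ for all $v_h\in\Vnull$, by \eqref{divdiv-f-2}; since $I_h w\in\Vnull$ whenever $w\in H^2_0(\Om)$, term (II) vanishes as well. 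Hence only term (III) survives, and $\langle f_h-f,w\rangle = (f, I_h w - w)_0$. Now I would insert $\bar f$: because $\bar f\in(\Vnull)^*$ and $I_h w$ reproduces the moment degrees of freedom up to degree $k-3$, \eqref{f_k-3-Ih} gives $(\bar f, I_h w - w)_0 = 0$, so $(f, I_h w - w)_0 = (f - \bar f, I_h w - w)_0$.

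It remains to bound $(f-\bar f, I_h w - w)_0$. Splitting over elements and using Cauchy–Schwarz locally, then the interpolation estimate \eqref{eq:interrorl2} $\|w - I_h w\|_{0,T}\le c h_T^2\|\nabla^2 w\|_{0,T}$, we get
\[
  (f-\bar f, I_h w - w)_0 = \sum_{T\in\cT_h}(f-\bar f, I_h w - w)_{0,T}
   \le \sum_{T\in\cT_h}\|f-\bar f\|_{0,T}\,c h_T^2\,\|\nabla^2 w\|_{0,T}
   \le c\Big(\sum_T h_T^4\|f-\bar f\|_{0,T}^2\Big)^{1/2}|w|_2,
\]
the last step being a discrete Cauchy–Schwarz. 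Taking the supremum over $w$ with $|w|_2=1$ yields \eqref{u-hatu}. I expect no genuine obstacle here: every ingredient (the $(\Vnull)^*$-orthogonality of $f_h$ and $\bar f$ against $w-I_h w$, the fact that $I_h w\in\Vnull$, and the local interpolation bound) is already available in the excerpt. The only point requiring a little care is verifying that $I_h$ indeed maps $H^2_0(\Om)$ into $\Vnull$ (needed so that $I_h w$ is an admissible test function in \eqref{divdiv-f-2}) and that the two extensions of $\langle\opdiv\opdiv\cdot,\cdot\rangle$ agree on $H^2_0+\Vnull$; both are stated right after \eqref{I_h} and in Lemma~\ref{lem:divdivmwh}, so this is bookkeeping rather than a real difficulty.
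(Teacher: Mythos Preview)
Your proof is correct and follows essentially the same route as the paper: the paper tests directly with $w=z:=u-\hat u$ and divides by $\|\nabla^2 z\|_{0,\Omega}$ at the end, while you phrase it as a sup over all $w\in H^2_0$ with $|w|_2=1$, but the three key ingredients---$(f,I_hw)_0=\langle f_h,I_hw\rangle$ from \eqref{divdiv-f-2}, $\langle f_h,w-I_hw\rangle=0$ from \eqref{f_h-I_h}, and $(\bar f,w-I_hw)_0=0$ from \eqref{f_k-3-Ih}, followed by the local interpolation bound \eqref{eq:interrorl2}---are identical and used in the same way.
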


\begin{proof}
We rename the error $z := u - \hat u$
and observe that, by the definition of $\hat u$,
\begin{eqnarray}
  \int_{\Om} \nabla^2 z : \nabla^2 v \,dx
  &=&  \langle f- f_h, v\rangle \label{eq:z1}
    = (f, v)_{0,\Om} - \langle f_h, v \rangle.
\end{eqnarray}
{From}  Lemma \ref{lem:divdivmwh} and \eqref{divdiv-f-2}  it follows that
$(f,I_h z)_{0,\Om} = \langle \opdiv\opdiv\si_h^{eq}, I_h z \rangle
 = \langle f_h, I_h z \rangle$.
Combining this fact with \eqref{f_k-3-Ih},  choosing $v = z$ in \eqref{eq:z1}
we arrive at
\begin{equation}
\|\nabla^2 z\|_{0,\Omega}^2 = (f, z-I_h z)_{0,\Om} - \langle f_h , z-I_h z \rangle - 
  ( \bar f  , z-I_h z )_{0,\Omega}. \label{nabla2z}
\end{equation}
The second term on the right hand side of \eqref{nabla2z} vanishes due to \eqref{f_h-I_h}.
Recalling the approximation property \eqref{eq:interrorl2} of the interpolation operator $I_h$ we get
\begin{eqnarray}
\label{6.6}
  \|\nabla^2 z\|_{0,\Omega}^2
  &=&    \sum_{T \in \cT_h} (f-\bar f , z-I_h z)_{0,T}\nn\\
  &\le&  \sum_{T \in \cT_h} \|f-\bar f \|_{0,T} \;ch_T^{2} \|\nabla^2 z\|_{0,T} \nn\\
	&\le& c\left(\sum_{T \in \cT_h} h_T^{4}\|f-\bar f \|_{0,T}^2 \;\right)^{1/2} \|\nabla^2 z\|_{0,\Om}.
\end{eqnarray}
A division by $\|\nabla^2 z\|_{0,\Omega}$ yields \eqref{u-hatu},
and the proof is complete
\end{proof}


Lemma \ref{lem:data-osc} and \eqref{eq:errorest_imp} yield the area-based terms of the
final error estimate in the DG-norm \eqref{IPDGNorm1}.
The jumps of $\partial_n u_h$ across element edges are added in a further contribution $\eta^{jump}$. Theorem~\ref{theo:final} below summarizes these results.

\begin{theorem} \label{theo:final}
The error $\|u - u_h\|_{DG}$ measured in the mesh-dependent $DG$ norm is bounded by the terms
\begin{equation}
\|u-u_h\|_{DG} \leq \left((\eta^{mean})^2 + (\eta^{jump})^2\right)^{1/2} + \frac12 \eta^{eq} + \eta^{osc} \label{6.7}
\end{equation}
where from the additive parts given below only the contribution of the 
data oscillation $\eta^{osc}$ contains a generic constant,
\begin{subequations}
\begin{eqnarray} 
\label{etaa}
\eta^{mean} &=& \|\nabla^2 u_h - \sigma^{mean} \|_{0,\Om},\\
\eta^{jump} &=& \left(\sum\limits_{E \in \cE_h(\bar{\Om})}
         \frac{\alpha}{h_E} \ \| \, \jump{\dn u_h} \|^2_{0,E} \right)^{1/2},\\
\eta^{eq} &=&  \|\nabla^2 u^{conf} - \sigma^{eq}\|_{0,\Om},\\
\label{etad}
\eta^{osc} &=& c \left( \sum_{T \in \cT_h} h_T^4 \|f-\bar f\|_{0,T}^2\right)^{1/2}.
\end{eqnarray}
\end{subequations}
\end{theorem}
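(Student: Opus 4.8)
The plan is to assemble Theorem~\ref{theo:final} from the three ingredients already prepared in the excerpt: the improved hypercircle estimate \eqref{eq:errorest_imp}, the data-oscillation bound from Lemma~\ref{lem:data-osc}, and a bookkeeping argument that passes from the broken $H^2$ seminorm $|u-u_h|_{2,h}$ to the full mesh-dependent $DG$ norm $\|u-u_h\|_{DG}$. First I would recall that by definition \eqref{IPDGNorm1} the squared $DG$ norm splits as $\|u-u_h\|_{DG}^2 = |u-u_h|_{2,h}^2 + \sum_{E} \tfrac{\alpha}{h_E}\|\jump{\partial_n(u-u_h)}\|_{0,E}^2$, and since $u \in H^2_0(\Omega)$ has no jumps in its normal derivative, the edge sum reduces to $\sum_E \tfrac{\alpha}{h_E}\|\jump{\partial_n u_h}\|_{0,E}^2 = (\eta^{jump})^2$. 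Hence $\|u-u_h\|_{DG} \le |u-u_h|_{2,h} + \eta^{jump}$ by the elementary inequality $\sqrt{a^2+b^2}\le a+b$ — or, more precisely, I would keep the square-root-of-sum form on the left to get the slightly sharper $\big((\eta^{mean})^2+(\eta^{jump})^2\big)^{1/2}$ appearing in \eqref{6.7}, by combining the $\eta^{jump}$ contribution with the $\eta^{mean}$ piece of \eqref{eq:errorest_imp} inside one Euclidean sum rather than adding them separately.

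Next I would invoke \eqref{eq:errorest_imp}, which already gives $|u_h-u|_{2,h} \le \eta^{mean} + \tfrac12\eta^{eq} + \eta^{osc}$ with $\eta^{mean} = \|\nabla_h^2 u_h - \sigma^{mean}\|_{0,\Om}$, $\eta^{eq} = \|\nabla^2 u^{conf} - \sigma^{eq}\|_{0,\Om}$, and $\eta^{osc} = \|\opdiv\opdiv\sigma_h^{eq} - f\|_{-2} = |u-\hat u|_2$. At this point the term $\eta^{osc}$ is still abstract; I would then apply Lemma~\ref{lem:data-osc} to replace it by the computable bound $c\big(\sum_{T\in\cT_h} h_T^4\|f-\bar f\|_{0,T}^2\big)^{1/2}$, which is exactly \eqref{etad}. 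Combining the $DG$-norm splitting with \eqref{eq:errorest_imp} and Lemma~\ref{lem:data-osc} yields \eqref{6.7} with the four quantities \eqref{etaa}--\eqref{etad}; the only generic constant enters through $\eta^{osc}$, precisely because the other three terms are plain $L_2$ norms of explicitly computed finite element quantities (and a pure geometric jump seminorm), while the oscillation term required the interpolation estimate \eqref{eq:interrorl2} and a duality argument.

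The main obstacle — really the only nontrivial point — is handling the edge-jump contribution correctly when passing from $|u-u_h|_{2,h}$ to $\|u-u_h\|_{DG}$ and folding it together with $\eta^{mean}$ under a single square root, rather than sloppily writing $\eta^{mean} + \eta^{jump}$. Concretely one uses that the map $w \mapsto \big(|w|_{2,h}^2 + \sum_E \tfrac{\alpha}{h_E}\|\jump{\partial_n w}\|_{0,E}^2\big)^{1/2}$ satisfies a triangle inequality (it is a norm on $\Vnull + H^2_0$), applied to the decomposition $u-u_h$ versus the "center" obtained by replacing $\nabla_h^2 u_h$ with $\sigma^{mean}$ componentwise while leaving the jump part untouched; since $\sigma^{mean}$ is a genuine tensor field with no intrinsic jump-of-normal-derivative, this produces exactly the pair $(\eta^{mean},\eta^{jump})$ in the Euclidean combination. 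Everything else is a direct citation of \eqref{eq:errorest_imp} and Lemma~\ref{lem:data-osc}, so the proof is short; I would write it in two or three sentences and point to Figure~\ref{fig:ee} for the geometric picture.

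\begin{proof}
By the definition of the mesh-dependent norm \eqref{IPDGNorm1} and since $u\in H^2_0(\Om)$ has no jumps of its normal derivative across interelement edges,
\[
  \|u-u_h\|_{DG}^2 = |u-u_h|_{2,h}^2
   + \sum_{E\in\cE_h(\bar\Om)}\frac{\alpha}{h_E}\,\|\jump{\partial_n u_h}\|_{0,E}^2
  = |u-u_h|_{2,h}^2 + (\eta^{jump})^2 .
\]
Writing $\sigma^{mean} = \tfrac12(\nabla^2 u^{conf}+\sigma^{eq})$ as in \eqref{meansig} and using the triangle inequality for the norm $\|\cdot\|_{DG}$ on $\Vnull+H^2_0(\Om)$, together with the bound \eqref{eq:errorest_imp} for the broken $H^2$ seminorm, we obtain
\[
  \|u-u_h\|_{DG}
   \le \big((\eta^{mean})^2+(\eta^{jump})^2\big)^{1/2}
    + \tfrac12\,\eta^{eq} + \eta^{osc},
\]
with $\eta^{mean}$, $\eta^{eq}$, $\eta^{osc}=\|\opdiv\opdiv\sigma_h^{eq}-f\|_{-2}$ as in \eqref{eq:errorest_imp}. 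Finally Lemma~\ref{lem:data-osc} estimates the oscillation term by $\eta^{osc}\le c\big(\sum_{T\in\cT_h}h_T^4\|f-\bar f\|_{0,T}^2\big)^{1/2}$, which gives \eqref{etad}; the remaining three quantities are the $L_2$ norms and the jump seminorm stated in \eqref{etaa}. This proves \eqref{6.7}.
\end{proof}
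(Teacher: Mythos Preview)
Your proposal is correct and follows essentially the same route as the paper: split the $DG$ norm as $\|u-u_h\|_{DG}^2=|u-u_h|_{2,h}^2+(\eta^{jump})^2$ using $u\in H^2_0(\Om)$, insert the chain from \eqref{eq:errorest_imp}, and fold $\eta^{mean}$ together with $\eta^{jump}$ under one square root via the elementary Minkowski inequality $\sqrt{(a_1+b)^2+c^2}\le\sqrt{a_1^2+c^2}+b$, then cite Lemma~\ref{lem:data-osc} for $\eta^{osc}$. The only cosmetic point is that calling this the ``triangle inequality for $\|\cdot\|_{DG}$'' is slightly loose, since $\sigma^{mean}$ is not $\nabla^2$ of any function; the paper (and your own planning paragraph) make clear that it is really the Euclidean triangle inequality applied to the pair $(|u-u_h|_{2,h},\eta^{jump})$.
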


\begin{proof}
The proof is almost complete from \eqref{eq:errorest_imp}, we only need to treat the jump terms in the DG norm,
\begin{eqnarray*}
\|u - u_h\|_{DG}
 &=& \left( |u - u_h|_{2,h}^2 + \sum_{E \in \cE_h(\bar\Om)} \frac{\alpha}{h} \|\jump{\partial_n u_h}\|_{0,E}^2 \right)^{1/2} \nn\\
&\leq& \left( \|\sigma^{mean} - \nabla_h^2 u_h\|_{0,\Omega}^2 + \sum_{E \in \cE_h(\bar\Om)} \frac{\alpha}{h} \|\jump{\partial_n u_h}\|_{0,E}^2 \right)^{1/2} + \nn \\
&&\|\nabla^2 \hat u - \sigma^{mean}\|_{0,\Omega} + \|\nabla^2 u - \nabla^2 \hat u\|_{0,\Omega}.
\end{eqnarray*}
By inserting the definitions \eqref{etaa}--\eqref{etad} we complete the proof.
\end{proof}


\begin{remark}  \label{rem:dataosc}
The constant $c$ in {Lemma~\ref{lem:data-osc}} and Theorem~\ref{theo:final}
can be bounded by
\beq 
\label{6.8}
  c \le 0.3682146
\eeq
\cite{CC16} due to an estimate of the interpolation by the Morley element
\cite{CG14}.
\end{remark}

We will use this explicit bound in Section \ref{NumRes}.

\section{Efficiency}
\label{sec:efficiency}

The efficiency of the new error bound will follow from a comparison
with a residual error estimator that is known to be efficient \cite{BGS10,FHP15,GHV11}.
When used as an upper bound, the new error bound contains no generic constant.
A lower bound, however, is derived only with an unknown generic constant.

\begin{lemma}
\label{lem:7.1}
If $T\in \cT_h$ and $\tau_h \in [P^{k-1}(T)]^{2\times2}_{sym}$, then
\begin{eqnarray}
  \|\tau_h\|_{0,T}^2 &\le& ch \|\tau_{h,nn}\|_{0,\dT}^2 +\nn\\
  &&c \max \left\{ \int_T \tau_h:q\,dx; ~q \in  [P^{k-2}(T)]^{2\times2}_{sym},
  \int_T q:q\,dx\le 1 \right\}^2, ~
\end{eqnarray}
with a constant $c$ which depends only on $k$ and the shape parameter
of $\cT_h$.
\end{lemma}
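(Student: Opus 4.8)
The statement is a local norm equivalence on a single simplex $T$: the full $L_2$ norm of a symmetric polynomial tensor $\tau_h$ is controlled by its normal–normal trace on $\partial T$ together with its $L_2$ inner products against symmetric tensors of one degree lower. The natural route is a \emph{scaling argument combined with a finite-dimensional compactness (dimension-counting) argument} on the reference element. First I would reduce to the reference simplex $\hat T$ by an affine pullback; under this transformation the polynomial degree is preserved, the space $[P^{k-1}(\hat T)]^{2\times2}_{sym}$ is fixed and finite-dimensional, and the normal–normal component $\tau_{nn}$ and the bilinear pairing $\int_T\tau_h:q\,dx$ transform in a controlled way (picking up powers of $h_T$ and factors depending only on the shape parameter). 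So it suffices to prove the estimate on $\hat T$ with $h=1$, i.e.\ to show that
\[
  \|\hat\tau\|_{0,\hat T}^2 \le c\,\|\hat\tau_{\hat n\hat n}\|_{0,\partial\hat T}^2
   + c\,\max\Big\{\int_{\hat T}\hat\tau:q\,dx \ ;\ q\in[P^{k-2}(\hat T)]^{2\times2}_{sym},\ \textstyle\int_{\hat T}q:q\,dx\le 1\Big\}^2
\]
for all $\hat\tau\in[P^{k-1}(\hat T)]^{2\times2}_{sym}$.

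\textbf{Key steps.} The right-hand side, viewed as a (squared) seminorm on the finite-dimensional space $[P^{k-1}(\hat T)]^{2\times2}_{sym}$, is clearly a continuous nonnegative quadratic functional; by the standard norm-equivalence lemma on finite-dimensional spaces it suffices to check that it is in fact a \emph{norm}, i.e.\ that it vanishes only for $\hat\tau=0$. So suppose $\hat\tau\in[P^{k-1}(\hat T)]^{2\times2}_{sym}$ satisfies $\hat\tau_{\hat n\hat n}=0$ on every edge of $\hat T$ and $\int_{\hat T}\hat\tau:q\,dx=0$ for all $q\in[P^{k-2}(\hat T)]^{2\times2}_{sym}$. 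These are exactly the conditions that all degrees of freedom in Lemma~\ref{lem1} (restricted to the single element $\hat T$ and all its edges, with $q_E$ ranging over $P^{k-1}(E)$ and $q_T$ over $[P^{k-2}(\hat T)]^{2\times2}_{sym}$) vanish. By the unisolvence asserted in Lemma~\ref{lem1}, the element-local degrees of freedom determine $\hat\tau\in[P^{k-1}(\hat T)]^{2\times2}_{sym}$ uniquely; hence $\hat\tau=0$. This gives the reference estimate, and scaling back to $T$ produces the claimed $h$-dependence (one factor $h$ multiplying the boundary term from the surface-measure scaling, the appropriate compensating factors absorbed into the generic constant), with $c$ depending only on $k$ and the shape regularity of $T$.

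\textbf{Main obstacle.} The only subtlety is checking that the edge functionals $q_E\mapsto\int_E\hat\tau_{\hat n\hat n}q_E\,ds$ for $q_E\in P^{k-1}(E)$ together with the volume functionals against $[P^{k-2}(\hat T)]^{2\times2}_{sym}$ are genuinely the unisolvent set of Lemma~\ref{lem1} and that ``$\hat\tau_{\hat n\hat n}=0$ on $\partial\hat T$'' is equivalent to the vanishing of all those edge functionals — this is immediate since $\hat\tau_{\hat n\hat n}|_E\in P^{k-1}(E)$, so testing against all of $P^{k-1}(E)$ detects it exactly. One should also record carefully how $\tau_{nn}$ and the volume pairing behave under the affine map so that the shape-parameter dependence of $c$ is transparent; this is routine bookkeeping with the Piola-type / metric factors and the uniform bounds on the Jacobian coming from local quasi-uniformity. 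No genuine analytic difficulty arises beyond these scaling computations; the content is entirely the unisolvence already available from Lemma~\ref{lem1}.
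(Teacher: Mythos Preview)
Your proposal is correct and follows exactly the approach the paper takes: the paper simply notes that since $[P^{k-1}(T)]^{2\times2}_{sym}$ is finite dimensional, the inequality follows from the unisolvence in Lemma~\ref{lem1} by a standard scaling argument. Your write-up spells out precisely this finite-dimensional norm equivalence on the reference element plus affine scaling, so there is nothing to add.
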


Since the space $[P^{k-1}(T)]^{2\times2}_{sym}$ is finite dimensional,
the inequality follows from Lemma \ref{lem1} by a standard scaling argument.

To show efficiency, we establish a bound of the equilibrated error estimate $\|\sigma_h^{eq} - \nabla^2 u_h\|_{0,T}$ on each element $T$ from above. The choice of $\sigma_h^{eq}$ in \eqref{constr-eq-DG} yields 
\begin{eqnarray}
\label{8.3}
  \int_T (\si_h^{eq}-\nabla^2 u_h):q \, dx
  &=& \int_{\dT} \gamma_E\jump{\partial_n u_h}\, q_{nn}\,ds  \nn\\
  &\le& \|\,\jump{\partial_n u_h}\|_{0,\dT}\|q_{nn}\|_{0,\dT} \nn\\[3pt]
    &\le& h^{-1/2} \|\,\jump{\partial_n u_h} \|_{0,\dT} \, \|q\|_{0,T}
\end{eqnarray}
by a scaling argument for $q \in [P^{k-2}(T)]^{2\times2}_{sym}$. Similarly,  on each edge $E \subset \partial T$
\[
  \si_{h,nn}^{eq}- \mean{\nabla^2 u_{h,nn}}
  =  \frac{\alpha}{h} \jump{\partial_n u_h}  \,.
\]
Algebraic manipulation allows to express the one-sided value $(\nabla^2 u_h)_{nn}|_{\partial T}$ in terms of jumps and averages on interior edges $E \subset \dT$, 
\begin{eqnarray*}
 \si_{h,nn}^{eq}-\nabla^2 u_{h,nn}|_{\dT}
 &=&\si_{h,nn}^{eq}-\mean{(\nabla^2 u_h)_{nn}}
 \pm\frac12 \jump{(\nabla^2 u_h)_{nn}} \\
  &=& \frac{\alpha}{h} \jump{\partial_n u_h} \pm \frac12 \jump{(\nabla^2 u_h)_{nn}} \,.
\end{eqnarray*}
Here, the sign of the second jump term in the last line depends on the orientation of the edge, namely it is negative if $T = T_1(E)$ and positive if $T = T_2(E)$. However,
we will refer only to the absolute value,
and the next inequality holds for both cases, and also for boundary edges,
\beq
\label{8.4}
  \| \si_{h,nn}^{eq}-(\nabla^2 u_h)_{nn}|_{\dT}\|_{0,E}
  \le \frac{\alpha}{h} \|\,\jump{\partial_n u_h}\|_{0,E}
  + \frac12 \|\, \jump{(\nabla^2 u_h)_{nn}}\|_{0,E} \,.
\eeq
We apply Lemma \ref{lem:7.1} to $\tau_h = \si_h^{eq}-\nabla^2 u_h$,
collect the terms in \eqref{8.3} and \eqref{8.4}, and recall Young's inequality,
\[
  \|\si_h^{eq}-\nabla^2 u_h\|_{0,T}^2
   \le c\sum_{E\in\dT} \Big(
   h^{-1} (1+\alpha)^2\|\,\jump{\partial_n u_h}\|_{0,E}^2
   +h\|\, \jump{(\nabla^2 u_h)_{nn}}\|_{0,E}^2 \, \Big).
\]
The terms on the right-hand side belong to the well-known
residual a posteriori error estimates in \cite{BGS10, FHP15,GHV11}.

The additional term $\|u_h-u^{conf}\|_{DG}$ is known to do not spoil
the efficiency. Eventually, the data oscillation is a term of higher order.
The a posteriori
error bound \eqref{eq:errorest}, and a fortiori the 
improved bound from Theorem~\ref{theo:final} is efficient.

The comparison between the two different methods is not only a global one,
but also local. Therefore, the new error bound is expected to be suitable also for
local refinement techniques.

\section {Equilibration for the Hellan--Herrmann-- \break Johnson method}

We will see that an equilibration
 for the Hellan--Herrmann--Johnson method  \cite{COM80} can be obtained
in a few lines, since the finite element spaces
$\Vnull$ and $M_h$ are the same as above.

To this end we rewrite the mixed formulation in \cite{COM80} with our symbols:
{\em Find $\sigma^{HHJ}_h \in M_h$ and $u_h \in \Vnull$}
such that
\beq
\label{HHJ-com}
\begin{array}{llll}
    a(\sigma^{HHJ}_h, \tau_h) + b(\tau_h,u_h) &=& 0 
    &\text{for all }\tau_h\in M_h\,, \\[3pt]
      b(\sigma^{HHJ}_h,v_h)                    &=& \ds-\int_{\Om} f v_h\,dx
    &\text{for all } v_h\in \Vnull\,.
\end{array}
\eeq
where
\begin{subequations}
\begin{eqnarray}
\label{bsiva}
  a(\sigma_h, \tau_h) &:=&  \int_{\Om} \sigma_h :\tau_h \,dx,  \\
\label{bsiv}
  b(\tau_h,v_h)   &:=&   \sum_T \Big(
   \int_T\opdiv \tau_h \cdot \nabla v_h \,dx -\int_{\dT} \tau_{h,nt}\, \partial_t v_h\, ds \Big), 
\end{eqnarray}
\end{subequations}
Note that we have changed a sign on the right-hand side of \eqref{HHJ-com}
in order to be consistent with \eqref{HHJ}.
Reordering the boundary terms in \eqref{bsiv} leads to the negative of the right-hand side of formula \eqref{eq:defdivdiv_divI}, i.e.,
\[
  b(\tau_h,v_h) = -\langle \opdiv\opdiv \tau_h, v_h\rangle.
\]
Thus the second line of \eqref{HHJ-com} ensures
\[
  \left< \opdiv\opdiv \sigma_h^{HHJ},v_h \right> = -b(\sigma_h^{HHJ},v_h)
  = \int_{\Om} fv_h\, dx
  \quad \text{for all } v_h \in V_h \,.
\]  
Similarly as with \eqref{divdiv-f-2} we conclude 
that $\sigma^{eq}_h:=\si^{HHJ}_h$ satisfies the relation \eqref{equilibratedtau}
of the first step in the equilibration procedure.
Thus the mixed method due to Hellan--Herrmann--Johnson 
provides an equilibrated moment tensor for the first aim immediately.
A common treatment with the discontinuous Galerkin method
is natural for the remainder
of the analysis. For this reason we refer to the analogous  considerations
in the previous sections.

The mixed method by Hellan--Herrmann--Johnson  is considered as nonconforming,
since the operator $\opdiv\opdiv$ does not send the tensor-valued functions
in $M_h$ to $L_2(\Om)$. Therefore the functions in $M_h$ are not
candidates for equilibrated tensors in an elementary manner.
If the operator is understood in the distributional sense,
there is no problem with the maximum problem \eqref{minprob}
nor with Theorem \ref{theo-2en}.
The concept of Hellan--Herrmann--Johnson looks very natural in this framework.
If it is considered as nonconforming,
then it is nonconforming only in a weak way.

\section {Numerical results}
\label{NumRes}

We present our results for the performance of the error estimator for two examples with known analytical solution.
 In the implementation, we used a hybrid DG formulation, where the jump $\jump{\partial_n u_h}$ is discretized by an extra unknown of order $k-1$ on element edges.



\subsection{Example 1: Solution with singularity}

The example from \cite{Gri92}, which is found also in \cite{BHL15}, contains
the L-shaped domain $\Omega := (-1, 1)^2 \backslash( [0,1)\times(-1,0])$ with angle $\omega = 3\pi/2$ at the re-entrant corner.
The right hand side $f \in L^2(\Omega)$ is chosen such that the singular solution $u \in H^2_0(\Omega)$
is given in polar coordinates by
\begin{equation}
\label{sol9.1}
   u(r, \phi) = \left(r^2 \cos^2(\phi) - 1\right)^2\left(r^2 \sin^2(\phi) - 1\right)^2 r^{1+z} g(\phi),
\end{equation}
where  $z = 0.5444837$ is
a non-characteristic root of $\sin^2(\omega z) = z^2 \sin^2(\omega)$ and
\begin{equation}
\begin{split}
&g(\phi) =\\
& \left(\tfrac{1}{z-1} \sin((z-1)\omega) - \tfrac{1}{z+1} \sin((z+1)\omega)\right)
\left( \cos((z-1)\phi) - \cos((z+1)\phi) \right)\\
&  -\left(\tfrac{1}{z-1} \sin((z-1)\phi) - \tfrac{1}{z+1} \sin((z+1)\phi)\right)
\left( \cos((z-1)\omega) - \cos((z+1)\omega) \right).
\end{split}
\end{equation}
The penalty parameter in the DG formulation \eqref{IPDGBilForm1} is set to $\alpha = (k+1)^2$.

{%
Computations were done with the DG finite element spaces $\Vnull$ for the orders $k=2$ and $k=3$.
The mesh was refined adaptively, where elements $T$ satisfying the relative criterion
\begin{equation}
\eta^{eq}(T) > 0.25  \max(\eta^{eq})
\end{equation} 
were marked for refinement.
A conforming approximation $u^{conf}$ was determined 
for the lowest-order case $k=2$ by an $L^2$ projection to the rHCT space of
reduced Hsieh--Clough--Tocher elements \cite{Cia78},
and by the projection to the full Clough--Tocher space \cite{CT65} for the case $k=3$, respectively.
The space of the equilibrated moment tensors $M_h$ is of order $k-1$ in both cases.
The contributions to the basic and improved error estimates \eqref{eq:errorest} and \eqref{eq:errorest_imp} are depicted in figures~\ref{fig:conv_p2} and \ref{fig:conv_p3}.
Results are also displayed in Table~\ref{tab:conv_p2}.

We compute the efficiency of the error estimate according to \eqref{eq:errorest} including the additional jump terms of the DG norm as
\begin{equation}
\text{eff}^{eq} = \frac{((\eta^{nonconf})^2 + (\eta^{jump})^2)^{1/2} + \eta^{eq} + \eta^{osc}}{\|u - u_h\|_{DG}}
\end{equation}
and the corresponding numbers for the improved error estimate due to Theorem~\ref{theo:final}
\begin{equation}
\text{eff} = \frac{((\eta^{mean})^2+(\eta^{jump})^2))^{1/2}+ \tfrac12\eta^{eq} + \eta^{osc}}{\|u - u_h\|_{DG}}.
\end{equation}
We find   $\text{eff} = 1.45$ for $k=2$ and $\text{eff} = 1.88$ for $k=3$ on the finest mesh;
see also the results in Table~\ref{tab:conv_p2}.
}

In both cases, the term $\eta^{eq}$ due to equilibration  is dominating. This leads to an increase of efficiency in the improved error estimate, where this contribution is cut by half. 

For the lowest-order case $k=2$, the error due to rHCT interpolation $\eta^{nonconf}$ is visibly smaller than the error contribution $\eta^{mean}$ due to the difference to averaged moment tensor $\sigma^{mean}$. However, for $k=3$, these estimates are much closer, and also very close to the exact error $\|u - u_h\|_{DG}$.

The data oscillation $\eta^{osc}$ is estimated as described in Lemma~\ref{lem:data-osc} with the factor from Remark~\ref{rem:dataosc}. We see that $\eta^{osc}$ is very high for very coarse discretizations. However, it is of higher order than all other contributions, and becomes negligible for realistic discretizations. 

{%
We note that the contribution $\eta^{nonconf}$ of the nonconformity is smaller than the
contribution of the jump terms, and both ones are small for fine grids.
Since the computation of  $\eta^{nonconf}$ requires $H^2$ elements, which one wants to avoid by the DG method in the first place,
it may be justified to neglect it in the computation of the a posteriori error bound.
}


\begin{figure}
\begin{center}
\includegraphics[width = 0.7\textwidth]{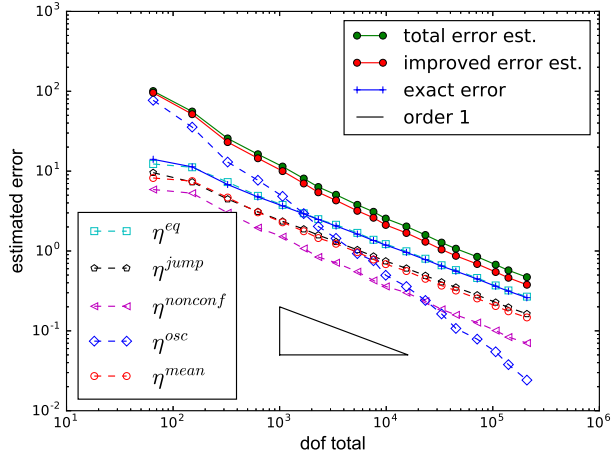}
\end{center}
\caption{Example 1: convergence of the error components for polynomial order $k = 2$, adaptive refinement based on $\eta^{eq}$.}
\label{fig:conv_p2}
\end{figure}

\begin{figure}
\begin{center}
\includegraphics[width = 0.7\textwidth]{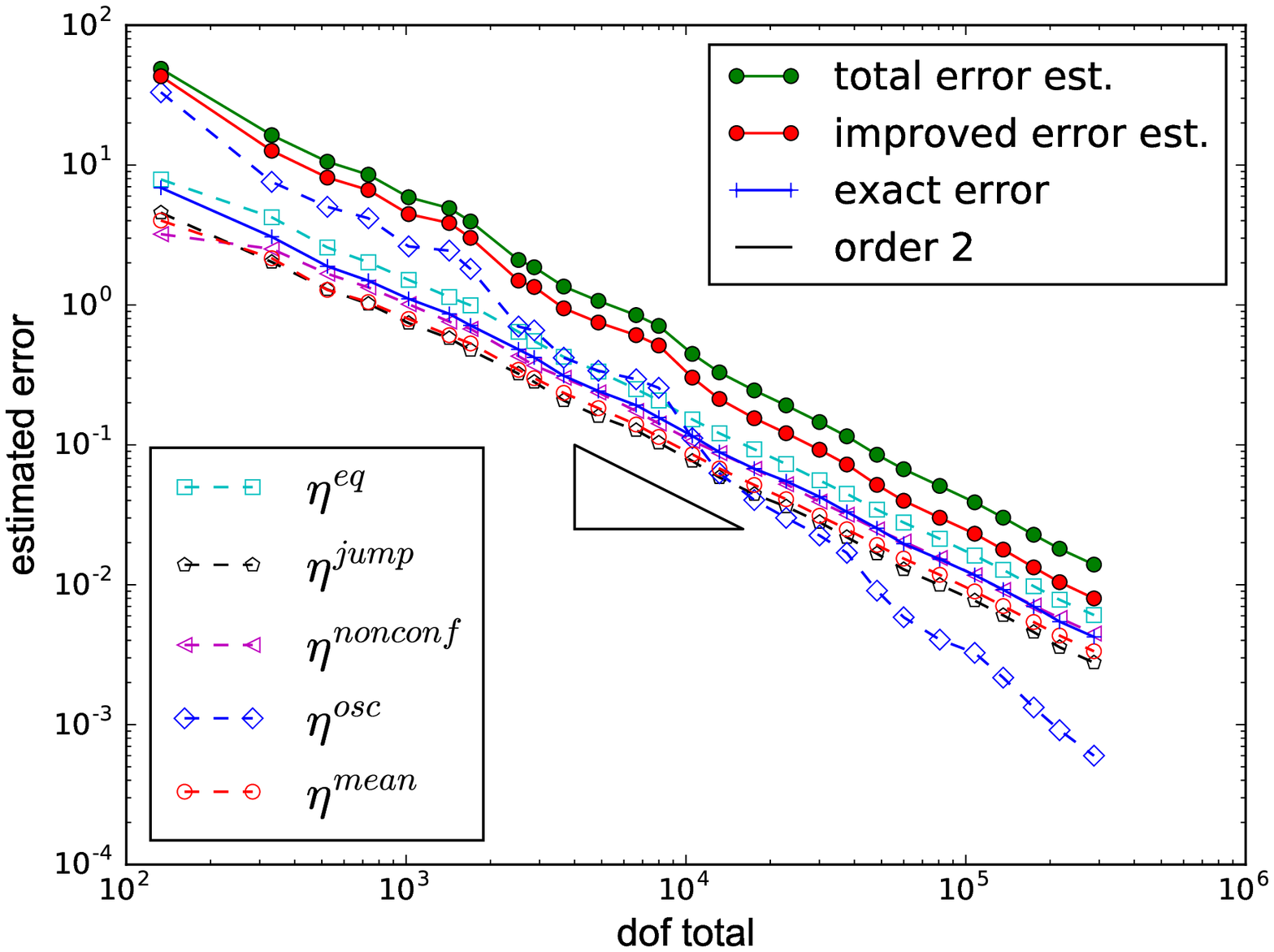}
\end{center}
\caption{Example 1: convergence of the error components for polynomial order $k = 3$, adaptive refinement based on $\eta^{eq}$.}
\label{fig:conv_p3}
\end{figure}

\begin{table}[ht]
\caption{Numerical results showing the 
size of the
contributions to the error bound in Example~1 for polynomial order $k = 2$.} \label{tab:conv_p2}
\vspace{-6pt}
\begin{center}
\footnotesize
\begin{tabular}
[c]{|r|l|lllll|cc|}%
\hline
dof V& exact err   & $\eta^{eq}$&$\eta^{nonconf}$ & $\eta^{osc}$& $\eta^{mean}$& $\eta^{jump}$& $\text{eff}^{eq}$ & $\text{eff} $ \vrule height 12pt depth 5pt width 0pt\\
\hline
65 & 14.05 & 12.28 &	 5.88&  	 77.04&  	 8.20 & 	 9.56&   7.15& 6.81\vrule height 12pt width 0pt\\
625 	& 4.75& 4.90&  	 1.95& 	 7.68& 	 3.04&  3.10& 	 3.42&  3.05\\
5357 &	 1.63& 	 1.68&	 0.54&	 0.92& 	 0.96& 1.03&	 2.32 & 	 1.95\\
45059 &	 0.558& 	 0.576&	 0.158&	 0.107& 0.319&	 0.350& 1.91 & 	 1.55 \\
106386 &	 0.361& 	 0.370& 	 0.101 & 	 0.054 & 0.204 & 	 0.227&	 1.86& 	 1.51\\
208986 &	 0.260& 	 0.268&	 0.070&	 0.024 & 0.147& 	 0.163& 1.80& 	 1.45\\
\hline
\end{tabular}
\end{center}
\end{table}

\subsection{Example 2: clamped, simply supported and free boundary} 
\label{sec:numres2}

In order to show the flexibility of the method we consider an example from \cite{TW59}.
The plate covers the unit square $\Omega = (0,1)^2$, there is a 
uniform load $f=1$, and clamped, simply supported and free boundaries occur.
The plate is
\begin{subequations}
\label{genbc}
\begin{align}
  &\text{simply supported,} \ u =\ 0, \ (\nabla^2 u)_{nn} =\ 0, && \text{for } x = 0 \text{ and } x=1,\\
  &\text{clamped,}\ u =\ 0, \ \partial_n u =\ 0,&& \text{for } y=0,\\
  &\text{free,}\  (\nabla^2 u)_{nn} =\ 0, \ K_n(\nabla^2 u)\cdot n =\ 0, && \text{for } y=0.
\end{align}
\end{subequations}
On the free boundary, $ K_n(\nabla^2 u) :=   \opdiv(\nabla^2 u) \cdot n+ \partial_t (\nabla^2 u)_{nt}$ is the boundary shear force.
The associated boundary parts are denoted as
$\Gamma_{S}$, $\Gamma_{C}$, and $\Gamma_{F}$, respectively.

\begin{remark} \label{rem:bc} %
In an $H^2$ conforming finite element method for the biharmonic equation, the essential boundary conditions are those on $u_h$ and $\partial_n u_h$. Conditions on $(\nabla^2u_h)_{nn}$ and $K_n(\nabla^2 u_h)$ are natural and, if inhomogeneous, enter into the right hand side of the variational equation \eqref{weak}. These conditions are then satisfied in weak sense only.

This is fundamentally different in the mixed Hellan--Herrmann--Johnson method and also the equilibration process.
Here, the essential conditions are those on $u_h$ and $\sigma_{h,nn}$.
Conditions on $\partial_n u_h$ and $K_n(\sigma_h)$ are natural
and satisfied in weak sense. 
We will elucidate the treatment of the different boundary conditions \eqref{genbc}
in the subsequent remark. 
\end{remark}

\begin{remark}
\label{obc}
The variational formulation \eqref{weak} refers to $\partial \Om=\Gamma_{C}$.
Now we deal with the adaptation for the boundary conditions \eqref{genbc}.
First, the condition ``for all $w\in H^2_0(\Om)$'' 
has to be replaced by
\begin{align}
 \text{for all }&w \in \hat H^2(\Om) := \{w\in H^2(\Om) : w = 0 \text{ on } \Gamma_C \cup \Gamma_S, \partial_n w = 0 \text{ on  }\Gamma_C\}. \nn
\end{align}
Obviously this applies to many equations.
In particular, the distributional definition \eqref{divdivdistr} is still valid.
Here we assume that $\sigma \in [L_2(\Om)]^{2\times2}_{sym}$ is sufficiently smooth such that the  boundary condition $\sigma_{nn} = 0$ is well defined on the free and simply supported boundary parts $\Gamma_F$ and $\Gamma_S$. 
Also the finite element functions have to satisfy the homogeneous essential boundary conditions $v_h=0$ and $\tau_{h,nn} = 0$ on their respective boundary parts. Then the extension of the double divergence operator to the finite element space \eqref{eq:defdivdiv_eps}, and its element-wise representations \eqref{eq:defdivdiv_divI} and \eqref{ingredients} are still valid. The edges and vertices on $\Gamma_F$ are included in $\cE_h^0$ and $\cV_h^0$, respectively. 

In the DG scheme, the different boundary conditions are realized as follows: 
\begin{itemize}
\item
The boundary condition  $u=0$ on $\Ga_C\cup\Ga_S$ is essential and enforced
by considering  in the  the variational formulation only the functions
in $V_h$ with this property.
Otherwise the natural boundary condition
$  K_n(\nabla^2 u) = 0$ on $\Gamma_F$ is
achieved in weak sense by the adapted variational formulation \eqref{augvar}.
\item
The boundary condition  $\dn u=0$  on $\Ga_{C}$
is essential and enforced approximately by the
penalty terms on $\Ga_{C}$.
There are no edge penalty terms on $\partial\Om \backslash \Gamma_{C}$ in the adapted variational formulation \eqref{augvar}, which implies the natural boundary condition $(\nabla^2 u)_{nn}=0$ on 
$\partial\Om\backslash\Ga_{C} $.
\end{itemize}
The adapted DG formulation reads
\begin{equation}
\label{augvar}
  A_h(u_h,v_h) = (f,v_h)_0  
\qquad \text{for all ~} v_h \in V_h \text{~ with } v_h(x)=0, ~x\in \Ga_C \cup \Gamma_S. 
\end{equation}
Here we understand $A_h$ as in \eqref{IPDGBilForm1} after the edge integrals on $\Ga_S\cup\Ga_F$
have been canceled.

In the equilibration process, we respect the essential boundary condition $\sigma^{eq}_{h,nn} = 0$ on $\Gamma_S \cup \Gamma_F$.
%
The construction rule \eqref{constr-eq-DG} for $\sigma^{eq}_{h}$ on an element $T\in\cT$
is now generalized
\[
\begin{split}
   \si^{eq}_{h,nn} & =
     \begin{cases}  0 & \text{on } \partial T \cap (\Ga_{S}\cup \Gamma_F)  , \\
      \text{as in \eqref{constr-eq-DG}} & \text{otherwise,} 
     \end{cases}
                 \\[6pt]
  \ds \int_T \si^{eq}_h : q_T\,dx & =  \ds \int_T \nabla^2 u_h : q_T \,dx
  - \sum_{E \subset \partial T\backslash (\Ga_S\cup\Ga_F)} \int_{E} \gamma_E\jump{\partial_n u_h}\, q_{T,nn}\,ds \\ 
	& \hspace{6cm} \forall q_T \in [P^{k-2}(T)]^{2\times 2}_{sym}.
	\end{split}
\]
{A tedious calculation shows $\langle \opdiv\opdiv \sigma^{eq}_h, v_h\rangle = A_h(u_h, v_h) = (f,v_h)_0$. }
\end{remark}

\begin{figure}
\begin{center}
\includegraphics[width = 0.7\textwidth]{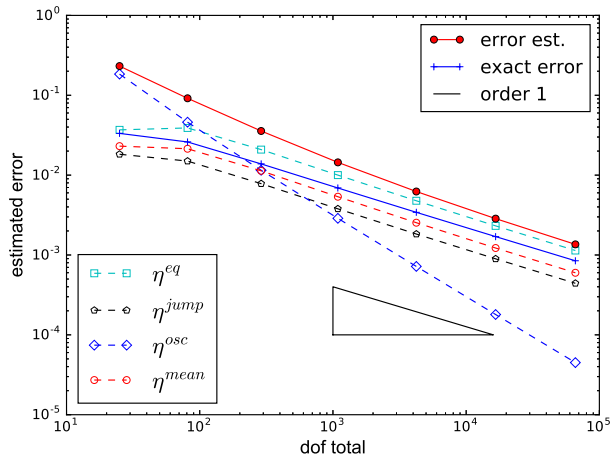}
\end{center}
\caption{Example 2: convergence of the error components for polynomial order $k = 2$, uniform refinement.}
\label{fig:BC2square_p2}
\end{figure}

The analytic solution of the example under consideration is given as a series
of trigonometric and hyperbolic functions, for details see the original work \cite{TW59}.
The domain $\Om$ is convex, and the solution is sufficiently regular to render adaptive refinement unnecessary. 

In Figure~\ref{fig:BC2square_p2} we show the convergence for a constant penalty parameter $\alpha = 2(k+1)^2$ and polynomial order $k=2$. The efficiency of the error estimate according to Theorem~\ref{theo:final} is 1.60 on the finest mesh. 



Additionally, we plot the behavior of the exact error and the error estimate components for different penalty parameters $\alpha = \alpha_0 (k+1)^2$ with $\alpha_0 \in [0.25, 8]$. Figure~\ref{fig:BC2alphap2} show the results on a mesh with 32768 elements and polynomial order $k=2$, respectively. We see that the total error stagnates for $\alpha_0 \geq 1$. While the nonconforming error estimate component $\eta^{jump}$ decrease with growing penalty parameter, the estimates based on equilibration $\eta^{eq}$ and $\eta^{mean}$ increase. The data oscillation is of course independent of the penalty parameter $\alpha_0$. The efficiency of the error estimator is best for moderate values of $\alpha_0 \simeq 1$, and increases up to $\text{eff} \simeq 2$ for large $\alpha_0$.

\begin{figure}
\begin{center}
\includegraphics[width = 0.7\textwidth]{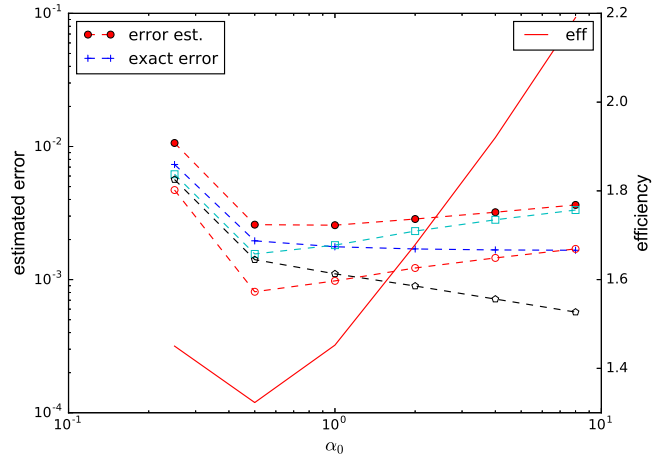}
\end{center}
\caption{Example 2: different values of $\alpha_0$, polynomial order $k = 2$. Error components $\eta^{eq}$, $\eta^{jump}$ and $\eta^{mean}$ are labelled as in Figure~\ref{fig:BC2square_p2} .}
\label{fig:BC2alphap2}
\end{figure}



\end{document}